\newtheorem{theorem}{Theorem}[section]
\newtheorem{lemma}[theorem]{Lemma}
\newtheorem{proposition}[theorem]{Proposition}
\newtheorem{definition}[theorem]{Definition}
\newtheorem{remark}[theorem]{Remark}
\theoremstyle{definition}
\newtheorem{example}[theorem]{Example}
\theoremstyle{definition}
\newtheorem{algo}[theorem]{Algorithm}
\Crefname{algo}{Algorithm}{Algorithms}
\newcommand{\N}{\mathbb{N}}
\newcommand{\Z}{\mathbb{Z}}
\newcommand{\R}{\mathbb{R}}
\newcommand{\calL}{\mathcal{L}}
\newcommand{\calD}{\mathcal{D}}
\newcommand{\calC}{\mathcal{C}}
\newcommand{\V}{\mathcal{V}}
\newcommand{\w}{\omega}
\newcommand{\Rhyp}{\R^{n\times n}_{hyp}}
\newcommand{\Zhyp}{\Z^{n\times n}_{hyp}}
\newcommand{\Aut}{\operatorname{Aut}}
\newcommand{\Stab}{\operatorname{Stab}}
\newcommand{\eps}{\varepsilon}
\newcommand{\GL}{\operatorname{GL}}
\newcommand{\Vgt}{\V_1^{>0}}
\newcommand{\Wgt}{\V_2^{>0}}
\newcommand{\Vgeq}{\V_1^{\geq 0}}
\newcommand{\Wgeq}{\V_2^{\geq 0}}
\newcommand{\Vigt}{\V_i^{>0}}
\newcommand{\Vigeq}{\V_i^{\geq 0}}
\newcommand{\FF}{\mathbb{F}}
\newcommand{\Rsym}{\mathbb{R}^{n\times n}_{sym}}
\newcommand{\Rsgt}{\mathbb{R}^{n\times n}_{>0}}
\newcommand{\Watson}{\operatorname{Watson}}
\newcommand{\FH}{\mathfrak{H}}
\title{Automorphism Groups of Hyperbolic Lattices}
\author{Michael H. Mertens\footnote{The results in this paper are partially contained in the author's master's thesis \cite{Mert12} written under the supervision of Prof. Dr. Gabriele Nebe at Lehrstuhl D f\"ur Matematik, RWTH Aachen University, Templergraben 64, D-52062 Aachen, Germany}{}\footnote{The author's research is supported by the DFG Graduiertenkolleg 1269 "Global Structures in Geometry and Analysis"}\\
{\it Mathematisches Institut der Universit\"at zu K\"oln,}\\
{\it Weyertal 86-90,} \\
{\it D-50931 K\"oln, Germany}\\
{\it Phone: $+49-(0)221/4704333$}\\
{\textit{E-Mail:} \tt mmertens@math.uni-koeln.de}\\
\url{http://www.mi.uni-koeln.de/~mmertens}
}
\begin{document}
\maketitle

\centerline{\bf Abstract}

\noindent
Based on the concept of dual cones introduced by J.~Opgenorth we give an algorithm to compute a generating system of the group of automorphisms of an integral lattice endowed with a hyperbolic bilinear form.


\section{Introduction}
In his famous paper  \emph{Nouvelles applications des param\`{e}tres continus \`{a} la th\'{e}orie des formes quadratiques} \cite{Vor}, G.~F. Voronoi presented an algorithm to enumerate (up to scaling) all perfect quadratic forms in a given dimension $n$. The general idea for that was to compute a face-to-face tesselation of a certain cone in the space of symmetric endomorphisms of $\R^n$ based on the pyramides induced by the shortest vectors of a perfect quadratic form. 

Generalizing Voronoi's ideas, M. Koecher came up with the concept of \emph{self-dual cones} or, as he called them, \emph{positivity domains} \cite{Koe57,Koe60} to obtain an alternative to the reduction theory of quadratic forms due to H. Minkowski. 

Another slight generalization of these ideas to so called \emph{dual cones} was then suiting for J. Opgenorth to find an algorithm to determine a generating system for the normalizer $N_{\GL_n(\Z)}(G)$ of a finite unimodular group $G$ of degree $n$ (cf. \cite{Opg01}), which is an essential tool e.g. dealing with crystallographic space groups (cf. \cite{OPS}).

The goal here is to use these methods to derive an algorithm which gives a generating system of the automorphism group of an integral hyperbolic lattice\footnote{In the literature often referred to as a Lorentzian lattice} $(L,\Phi)$ (see Notation below). 

There exists an algorithm due to E. B. Vinberg \cite{Vin72} to construct the maximal normal subgroup of the automorphism group of a hyperbolic lattice that is generated by reflections. But this algorithm terminates if and only if this reflection group has finite index in the full automorphism group. Lattices with this property, so-called \emph{reflective} lattices, are very rare: For example if we consider the lattice $\Z^n$ together with the bilinear forms induced by matrices 
\[H^{(d)}_n:=diag(-d,1,\dots ,1)\in\Z^{n\times n}\text{ for }d>0,\]
then it is known that $(\Z^n,H^{(1)}_n)$ is reflective if and only if $n\leq 19$ (cf. \cite{Vin72}). According to the classification of reflective hyperbolic lattices of rank $3$ by D. Allock in \cite{Lorentz}, the highest prime divisor of the discriminant of such a lattice is $97$ (cf. \cite[p. 24]{Lorentz}. Thus reflective lattices can neither occur in high dimensions nor for high discriminants.

To the author's knowledge so far there is no algorithm known to determine the automorphism group of a general hyperbolic lattice. The algorithm presented in this paper does at least not have theoretical limitations although in practice it can only handle lattices of small ranks and moderate discriminants (see \Cref{secPerformance}).

The paper will be organized as follows: In \Cref{subprelim} we recall the basic definitions and key results about dual cones from \cite{Opg01} which give a general method to determine generating systems of discontinuous groups acting on dual cones. The application of the results in \Cref{subprelim} on hyperbolic lattices as well as a quite powerful way to shorten the calculation time is given in \Cref{secHyp}. In \Cref{secPerformance} we analyse the scope and running time of our algorithm and give some examples. These were calculated using the computer algebra system \textsc{Magma} (cf. \cite{Magma}). The source code for the necessary \textsc{Magma}-package \texttt{AutHyp.m} as well as a short description of the included intrinsics is available via the author's homepage \url{http://www.mi.uni-koeln.de/~mmertens}.

\paragraph*{Notation}\label{parNot}
A \emph{lattice} $(L,\Phi)$ always consists of two data, a free $\Z$-module $L$ of rank $n$ with basis $B=(b_1,\dots ,b_n)$ and a non-degenerate, symmetric bilinear form $\Phi:\V\times \V\rightarrow \R$ where $\V=\R\otimes_\Z L$. The \emph{signature} of the bilinear form will always be given as a pair $(p,-q)$ where $p$ denotes the number of positive and $q$ the number of negative eigenvalues of the \emph{Gram matrix} of $\Phi$ with respect to $B$ which is denoted by ${}_B\Phi^B:=(\Phi(b_i,b_j))_{i,j=1}^n$. By $L^\#$ we denote the \emph{dual lattice} of $L$ and in case that $L$ is integral, i.e. $L\subseteq L^\#$, we write $\Delta(L):=L^\#/L$ for the \emph{discriminant group} of $L$. The \emph{automorphism group} of a lattice $(L,\Phi)$ is defined as 
\[\Aut(L):=\{g\in\GL(\V)\mid Lg=L\text{ and } \Phi(xg,yg)=\Phi(x,y)\text{ for all }x,y\in L\}.\]
If $(L,\Phi)$ is an integral lattice, we can consider $\Aut(L)$ as a subgroup of $\GL_n(\Z)$ by fixing a basis $B$ of $L$:
\[\Aut(L)\cong\Aut_\Z(A):=\{g\in\GL_n(\Z)\mid gAg^{tr} = A\},\]
where $A={}_B\Phi^B$. Note that $\Aut(L)$ acts on $L$ from the right while it acts from the left on the set of Gram matrices of $L$.

For any ring $R$ let $R^n:=R^{1\times n}$ be the free $R$-module of rank $n$ represented as a row vector. By $e_i$ we denote the $i$th row of the $n\times n$ unit matrix $I_n$.

For a subset $S$ of any $R$-module $M$ let $\langle S\rangle_R$ be the submodule of $M$ generated by $S$ (mostly we will omit the subscript $R$ if there are no confusions about the base ring to be worried about). Similarly, if $S$ is a subset of some group $G$, we denote by $\langle S\rangle$ the subgroup of $G$ generated by $S$.
\section{Preliminaries}\label{subprelim}
In this section we briefly recall the most important definitions and results from the first two sections of \cite{Opg01}.

Throughout this section, let $\V$ be a real vector space of dimension $n$ and
\[\sigma:\V\times \V\rightarrow \R\]
be a positive definite bilinear form on $\V$.

Two non-empty open subsets $\Vgt,\Wgt\subset\V$ are called \emph{dual cones} with respect to $\sigma$, if the following properties hold:
\begin{enumerate}[(DC.1)]
\item\label{DC1} For $x\in\Vgt$, $y\in\Wgt$ we have $\sigma(x,y)>0$.
\item\label{DC2} If $\Vigeq$ denotes the topological closure of $\Vigt$ in $\V$ then for any $x\in\V\setminus\Vgt$ there is a $y\in\Wgeq\setminus\{0\}$ such that $\sigma(x,y)\leq 0$. The same holds for changed roles of $x$ and $y$. 
\end{enumerate}

For fixed $\sigma$ and dual cones $\Vgt,\Wgt$ a discrete subset $D\subset\Wgeq\setminus\{0\}$ is called \emph{admissible} if for every $x$ in the boundary $\partial\Vgt$ of $\Vgt$ and every $\eps>0$ there is a $d\in D$ such that $\sigma(x,d)<\eps$ (cf. \cite[Definition 1.4, Lemma 1.5]{Opg01}). For such a set $D$ and some vector $x\in\Vgt$ we define (cf. \cite[Definition 1.2]{Opg01})
\begin{enumerate}[(i)]
\item The $D$\emph{-minimum} of $x$:
\[\mu_D(x):=\min\limits_{d\in D} \sigma(x,d),\]
\item The set of $D$\emph{-minimal vectors} of $x$
\[M_D(x):=\{d\in D\,\mid\, \sigma(x,d)=\mu_D(x)\},\]
\item The $D$\emph{-Voronoi domain} of $x$:
\[\calD_D(x):=\left\lbrace \sum\limits_{d\in M_D(x)} \alpha_dd\,\mid\, \alpha_d>0\right\}.\]
\end{enumerate}

If $\dim\langle M_D(x)\rangle=n$, then we call $x$ a $D$\emph{-perfect} vector and denote the set of all $D$-perfect vectors with $D$-minimum $1$ by $P_D$ (cf. \cite[Definition 1.2]{Opg01}). As shown in \cite[Proposition 1.8]{Opg01}, $P_D$ is not empty because for every $y\in\Vgt$ there exists some $x\in P_D$ such that $\calD_D(y)\subseteq\calD_D(x)$. The proof given there is constructive and can also be used to compute new $D$-perfect points from old ones.

Let $x\in P_D$. A vector $r\in\V_1\setminus\{0\}$ with $\sigma(r,d) \geq 0$ for all $d\in M_D(x)$ and the additional property that $\sigma(r,d)=0$ for $n-1$ linearly independent vectors $d\in M_D(x)$ is called a \emph{direction} of $x$. 

If $r$ is a non-blind direction, i.e. $r\notin\Vgeq$, then there exists a number $\rho>0$ such that $y:=x+\rho r\in P_D$. The $D$-perfect vector $y$ is called a \emph{neighbour} of $x$ in direction $r$, the vectors $x$ and $y$ are called \emph{contiguous}. 

By \cite[Theorem 1.9]{Opg01}, the $D$-Voronoi domains of the $D$-perfect points form a face-to-face tesselation of the whole cone $\Wgt$, i.e. the (undirected) $D$\emph{-Voronoi graph} $\Gamma_D$ with vertex set $P_D$ in which  two vertices are adjacent if and only if the corresponding $D$-perfect points are contiguous, is connected and locally finite.

Now consider a group $\Omega\leq\GL(\V)$ which leaves the cone $\Vgt$ invariant and acts \emph{properly discontinuously} on $\Vgt$

For $\w\in\Omega$ the \emph{adjoint element} of $\w$ is the unique $\w^{ad}\in\GL(\V)$ fulfilling $\sigma(x\w,y)=\sigma(x,y\w^{ad})$ for all $x,y\in\V$. The \emph{adjoint group} of $\Omega$ is defined by
\[\Omega^{ad}:=\{\w^{ad}\,\mid\, \omega\in\Omega\}.\]

By \cite[Lemma 2.1]{Opg01}, the group $\Omega$ acts on the $D$-Voronoi graph $\Gamma_D$ in case that $D$ is admissible.

There is a whole theory built around groups acting on graphs, often called Bass-Serre theory. We refer the reader to the monographs \cite{Dicks} by W. Dicks and \cite{Ser} by J.-P. Serre for the original statement of the Bass-Serre theorem, which implies the following theorem if translated into this context.
\begin{theorem}\label{theoBassSerre}(\cite[Theorem 2.2]{Opg01})\mbox{}\\
Let $\Omega\leq\GL(\V)$ act properly discontinuously on $\Vgt$ and let $D\subset\Wgeq\setminus\{0\}$ discrete, admissible and invariant under the action of $\Omega^{ad}$. Furthermore, let $X$ denote a transversal of $P_D/\Omega$ such that the subgraph $\Gamma_D(X)$ of $\Gamma_D$ generated by $X$ is connected and put $T$ as a maximal spanning tree of $\Gamma_D(X)$. The vertices of $\Gamma_D$ which are adjacent to $T$ are collected in the set $\delta(T)$. Finally, choose for $y\in \delta(T)$ one $\w_y\in\Omega$ such that $y\w_y^{-1}\in X$. Then it holds that
\[\Omega=\langle \w_y,\Stab_\Omega(x) | y\in\delta(T), x\in X\rangle.\]
\end{theorem}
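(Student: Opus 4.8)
The statement is precisely the specialization of the Bass--Serre theorem on groups acting on graphs to this geometric situation, so one option is to check its hypotheses and quote it directly: by the assumptions here together with \cite[Lemma 2.1]{Opg01} the group $\Omega$ acts on $\Gamma_D$, which is connected (and locally finite) by \cite[Theorem 1.9]{Opg01} and nonempty by \cite[Proposition 1.8]{Opg01}, with $X$ a fundamental domain, $T$ a spanning tree of it, and the $\omega_y$ recording the edge identifications. I would instead prefer to give the short self-contained argument, which really only uses connectedness of $\Gamma_D$.

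The plan is as follows. Put $H:=\langle\omega_y,\Stab_\Omega(x)\mid y\in\delta(T),\,x\in X\rangle\le\Omega$; since $H\le\Omega$ is obvious, it suffices to prove $\Omega\le H$. The central step will be to show that $S:=\bigcup_{h\in H}Xh$, the union of all $H$-translates of the vertex set of $T$ (which is just $X$), is all of $P_D$. By construction $S$ is nonempty and $H$-invariant, so by connectedness of $\Gamma_D$ it is enough to verify that $S$ is closed under taking neighbours in $\Gamma_D$. Given $v=xh\in S$ (with $x\in X$, $h\in H$) and a neighbour $w$ of $v$, I would apply $h^{-1}$, which acts as a graph automorphism, to see that $wh^{-1}$ is adjacent to $x\in X\subseteq T$; hence either $wh^{-1}\in X$, in which case $w\in Xh\subseteq S$, or $wh^{-1}\in\delta(T)$, in which case the chosen $\omega_{wh^{-1}}\in\Omega$ satisfies $wh^{-1}\omega_{wh^{-1}}^{-1}\in X$, so that $w\in X\cdot(\omega_{wh^{-1}}h)$ with $\omega_{wh^{-1}}h\in H$. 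Either way $w\in S$, which gives $S=P_D$.

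To finish, I would take an arbitrary $\omega\in\Omega$ and a fixed base vertex $x_0\in X$; then $x_0\omega\in P_D=S$, so $x_0\omega=x_1h$ for some $x_1\in X$, $h\in H$, and $x_1=x_0(\omega h^{-1})$ lies in the $\Omega$-orbit of $x_0$. Because $X$ is a transversal of $P_D/\Omega$, this forces $x_1=x_0$, hence $\omega h^{-1}\in\Stab_\Omega(x_0)\le H$ and $\omega\in H$, proving $\Omega\le H$ and thus equality. The step I expect to need the most care is making the case distinction ``$wh^{-1}\in X$ or $wh^{-1}\in\delta(T)$'' genuinely exhaustive: one must read ``$\delta(T)$ collects the vertices of $\Gamma_D$ adjacent to $T$'' so that every vertex outside $X$ adjacent to some vertex of $X$ actually occurs as a $y\in\delta(T)$ carrying its $\omega_y$, and one must track the left/right conventions for the action of $\Omega$ on $\V$ and on $\Gamma_D$ fixed in the Notation throughout, so that products like $\omega_{wh^{-1}}h$ land on the correct side. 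Note that connectedness of $\Gamma_D$, and not local finiteness, is what the argument for the theorem itself rests on; local finiteness is what later makes the resulting procedure an algorithm.
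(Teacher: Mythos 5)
Your argument is correct. Note first that the paper does not actually prove this theorem: it is quoted from \cite[Theorem 2.2]{Opg01} and justified only by the remark that it is the Bass--Serre theorem translated into this context, so the ``paper's proof'' is a citation, and your self-contained argument is necessarily a different (and more elementary) route. The two external inputs you rely on are exactly the ones the surrounding text supplies: that $\Omega$ acts on $\Gamma_D$ by graph automorphisms (\cite[Lemma 2.1]{Opg01}, which is where admissibility of $D$ enters) and that $\Gamma_D$ is connected with $P_D\neq\emptyset$ (\cite[Theorem 1.9, Proposition 1.8]{Opg01}). Granting these, your three steps are sound: the set $S=\bigcup_{h\in H}Xh$ is closed under adjacency because every neighbour of a vertex of $X$ lies in $X\cup\delta(T)$ and hence either already lies in a translate of $X$ or carries an $\omega_y$; connectedness then forces $S=P_D$; and transversality of $X$ converts $x_0\omega=x_1h$ into $\omega h^{-1}\in\Stab_\Omega(x_0)$. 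You also correctly isolate the one point requiring care, namely the exhaustiveness of the case split on $wh^{-1}$, which holds under the stated reading of $\delta(T)$ as \emph{all} vertices of $\Gamma_D$ adjacent to $T$. Two remarks on the comparison. What the Bass--Serre route buys in addition is a presentation of $\Omega$ (relations coming from the tree $T$ and the edges outside it); your argument does not attempt this, and indeed for generation alone it shows that the tree structure of $T$ is irrelevant --- only the fact that its vertex set is all of $X$ is used. Conversely, your final step invokes only $\Stab_\Omega(x_0)$ for a single base vertex, so you have in fact proved the formally stronger statement that one vertex stabilizer together with the $\omega_y$ already generates $\Omega$; this is consistent (the remaining stabilizers are then expressible through the connecting elements) and a fortiori yields the theorem as stated, whose larger generating set contains yours.
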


In case that the residue class graph is finite, the above theorem immediately gives a template algorithm to compute a generating system for $\Omega$:
\begin{algo}\label{alg}\mbox{}\\
Input: $\Vgt,\,\Wgt$ dual cones w.r.t. $\sigma$, $D\subset\Wgeq\setminus\{0\}$ discrete, admissible and $\Omega$-invariant.

\noindent Output: a transversal for $P_D/\Omega$, generators for $\Omega$.
\begin{enumerate}
\item Find a $D$-perfect point $x_1$ and initalize $L_1=\{x_1\},\,L_2:=\emptyset,\,S:=\emptyset$.
\item If $L_1=\emptyset$, then return $L_2,\,S$; else choose $x\in L_1$.
\item Compute a generating system $S_x$ for $\Omega_x$ and put $S:=S\cup S_x$.
\item Compute the set $R$ of neighbours of $x$ and a transversal $R'$ of $R/\Omega_x$.
\item For $y\in R'$ decide whether there is a $z\in L_1\cup L_2$ and an $\w\in\Omega$ with $y\omega=z$.

If not, put $L_1:=L_1\cup\{y\}$.

If $z\in L_1$ then put $S:=S\cup\{\w\}$.
\item Put $L_2:=L_2\cup \{x\}$, $L_1:=L_1\setminus\{x\}$. Go to step $2$.
\end{enumerate}
\end{algo}
One of the crucial problems using the above method is to prove the finiteness of the residue class graph $\Gamma_D/\Omega$. For Opgenorth's normalizer algorithm this was proven in \cite{Jaq} and \cite{Opg96}. In general there is the following theorem which is a generalization of \cite[Satz 6]{Koe60}.
\begin{theorem}\label{theofundfin}(Koecher, 1960)\\
Let $L$ be a full lattice in $\V$ and $D\subseteq \Wgeq\cap L\setminus \{0\}$ admissible. Assume that $L\Omega^{ad}=L$ and $D\Omega^{ad}=D$. Then the following are equivalent:
\begin{enumerate}[(1)]
\item There are only finitely many $\Omega$-equivalence classes of $D$-perfect points with $D$-minimum $1$.
\item There exists a set $\tilde\FF\subset\Wgt$ fulfilling that for every $y\in\Wgt$ there is an $\w\in\Omega$ with $x\w\in\tilde\FF$, such that there is a $y_0\in\Wgt$ with $d-y_0\in\Wgeq$ for all $d\in\tilde\FF\cap L$.
\end{enumerate}
\end{theorem}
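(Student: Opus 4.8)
The plan is to prove the two implications $(1)\Rightarrow(2)$ and $(2)\Rightarrow(1)$ separately, in both cases exploiting the face-to-face tesselation of $\Wgt$ by the Voronoi domains of $P_D$ (\cite[Theorem 1.9]{Opg01}) and the fact that $\Omega$ acts on this tesselation (\cite[Lemma 2.1]{Opg01}). Concretely, a short computation with adjoints shows $M_D(x\w)=M_D(x)(\w^{ad})^{-1}$ and hence $\calD_D(x\w)=\calD_D(x)(\w^{ad})^{-1}$, an action under which the lattice point $s_x:=\sum_{e\in M_D(x)}e$ of $\calD_D(x)$ transforms to $s_{x\w}$ and stays in $L$ since $L\Omega^{ad}=L$.

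For $(1)\Rightarrow(2)$ I would choose representatives $x_1,\dots,x_m$ of the finitely many $\Omega$-classes and put $\tilde\FF:=\bigcup_{i=1}^m\bigl(\overline{\calD_D(x_i)}\cap\Wgt\bigr)$. Since the closed Voronoi domains cover $\Wgt$ and the $\Omega$-orbits among them are represented by the $\overline{\calD_D(x_i)}$, the set $\tilde\FF$ has the required transversal property. The point $y_0$ then comes from lattice geometry: each $\overline{\calD_D(x_i)}$ is a full-dimensional pointed rational polyhedral cone, being generated by the lattice vectors of $M_D(x_i)\subseteq D$, so by Gordan's lemma the semigroup $\overline{\calD_D(x_i)}\cap L$ is finitely generated, and the primitive integral functionals cutting out its facets take values $\geq 1$ at every lattice point of the open part $\calD_D(x_i)\cap\Wgt$; this forces every such lattice point to lie in $y_0^{(i)}+\Wgeq$ for a suitable $y_0^{(i)}\in\Wgt$. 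A common $y_0\in\Wgt$ lying (in the cone order) below the finitely many $y_0^{(i)}$, which exists as $\Wgt$ is an open convex cone, then satisfies $d-y_0\in\Wgeq$ for all $d\in\tilde\FF\cap L$.

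For $(2)\Rightarrow(1)$, given $\tilde\FF$ and $y_0$, I would first note — using the action above — that every $\Omega$-class of $D$-perfect points has a representative $x$ with $s_x\in\tilde\FF\cap L$, hence with $\sum_{e\in M_D(x)}\sigma(z,e)=\sigma(z,s_x)\geq\sigma(z,y_0)$ for all $z\in\Vgeq$. Since $x\in P_D$, one also has $\sigma(x,e)=1$ for all $e\in M_D(x)$ and $\sigma(x,d)\geq 1$ for all $d\in D$; admissibility of $D$ then prevents $x$ from approaching $\partial\Vgt$. Combining the lower bound supplied by $y_0$ with these normalizations confines $x$ to a compact subset of $\Vgt$, so that the minimal vectors $M_D(x)=\{d\in D\mid\sigma(x,d)=1\}$ all lie in a bounded, hence (by discreteness of $L$) finite, subset of $D$. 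Only finitely many configurations $M_D(x)$ — and therefore only finitely many $D$-perfect points up to $\Omega$ — are possible, which is $(1)$.

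The step I expect to be the main obstacle is this last confinement in $(2)\Rightarrow(1)$: deriving an honest bound on $\|x\|$ (equivalently, on the minimal vectors of a perfect point whose Voronoi domain meets $\tilde\FF$) from the single inclusion $s_x-y_0\in\Wgeq$ together with $\mu_D(x)=1$ and the admissibility of $D$. This is exactly where \cite[Satz 6]{Koe60} has to do its (essentially measure-theoretic) work, and transporting that argument from Koecher's self-dual cones to the present dual-cone setting is the delicate point; by contrast, producing $\tilde\FF$ and $y_0$ in $(1)\Rightarrow(2)$ is comparatively soft, resting only on the rationality of the Voronoi cones and Gordan's lemma.
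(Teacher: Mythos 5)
Your overall architecture matches the paper's: both implications are routed through the Voronoi tesselation, $(1)\Rightarrow(2)$ builds $\tilde\FF$ out of the finitely many Voronoi domains of a transversal, and $(2)\Rightarrow(1)$ moves a distinguished lattice point attached to each perfect point into $\tilde\FF$. But the distinguished point you chose in $(2)\Rightarrow(1)$ is the wrong one, and this is precisely where your proof has a genuine gap --- the one you yourself flag. Transporting $s_x=\sum_{e\in M_D(x)}e$ into $\tilde\FF$ and using $s_x-y_0\in\Wgeq$ only yields $\sigma(x,y_0)\le\sigma(x,s_x)=|M_D(x)|\cdot\mu_D(x)=|M_D(x)|$, and $|M_D(x)|$ is not a priori bounded over $P_D$, so no confinement of $x$ follows; admissibility by itself only keeps $x$ away from $\partial\Vgt$ in a projective sense and does nothing to prevent $\|x\|\to\infty$ inside the cone. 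The paper's proof instead applies condition $(2)$ to a \emph{single} minimal vector: setting $D_0:=\tilde\FF\cap D$, every class of perfect points has a representative $x$ with some $d_0\in M_D(x)\cap D_0$ (via $M_D(x\w^{-1})=M_D(x)\w^{ad}$ and $D\Omega^{ad}=D$), and then $d_0-y_0\in\Wgeq$ gives $\sigma(x,y_0)\le\sigma(x,d_0)=\mu_D(x)=1$. Since $\{z\in\Vgeq\mid\sigma(z,y_0)\le 1\}$ is compact for $y_0\in\Wgt$ and $P_D$ is discrete (\cite[Lemma 1.6]{Opg01}), finiteness follows at once. There is no residual ``measure-theoretic'' work to import from Koecher; the entire content of the implication is this one-line estimate, which your choice of $s_x$ destroys.

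Your $(1)\Rightarrow(2)$ is essentially viable but has a smaller, patchable defect: the facet-functional argument only treats lattice points of the \emph{open} Voronoi domains, whereas your $\tilde\FF$ (built from closures, as it must be to have the transversal property) also contains lattice points lying on proper faces of the Voronoi cones, where some facet functionals vanish and the bound $\ge 1$ fails. You would need to rerun the argument inside each of the finitely many faces, each again a rational cone whose relative interior lies in $\Wgt$ as soon as it contains one point of $\Wgt$. The paper sidesteps this case distinction with Koecher's device: decompose each Voronoi cone into pyramides $P(M)$ with $|M|=n$ and note that any $\ell\in P(M)\cap L$ has all nonzero barycentric coordinates at least $\frac1\gamma$ with $\gamma=|L/\langle M\rangle_\Z|$, so that $\ell-\frac1\gamma\sum_{\lambda_k\neq 0}v_k\in\Wgeq$ uniformly over all faces. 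Your appeal to Gordan's lemma does no work in your argument and can be dropped.
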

\begin{proof}[Sketch of a proof]
\underline{$(1)\Rightarrow (2)$:} This is the statement of \cite[Satz 6]{Koe60} which Koecher stated for self-dual cones. \emph{Mutando mutandis} the same arguments work for dual cones as well.

Let 
\[\FF(D,\Omega):=\bigcup\limits_{x\in P_D/\Omega}\calD_D(x)/\Omega_x^{ad}.\]
If $P_D/\Omega$ is finite, then $\FF:=\FF(D,\Omega)\cap\Wgt$ is a fundamental domain for the action of $\Omega^{ad}$ on $\Wgt$ (cf \cite[Satz 4]{Koe60}). Then one checks that choosing $\tilde\FF=\FF$ fulfills $(2)$. In order to do that, one remarks that since $\FF$ is contained in the union of finitely many $D$-Voronoi domains that it suffices to show the following assertion.

\begin{quote} 
For all $x\in P_D$ there is a $y_0=y_0(x)\in\Wgt$ such that for all \mbox{$d\in \calD_D(x)\cap\Wgt\cap L$} we have that $d-y_0\in\Wgeq$.
\end{quote}

Since $\calD_D(x)$ is a finite union of pyramides $P(M)$ generated by a set of precisely $n=\dim\V$ vectors, thus we can even restrict ourselves to proving the above claim for this kind of pyramides. Then it is easy to see that for 
\[y_\ell:=\frac 1\gamma \sum\limits_{\substack{k=1\\ \lambda_k\neq 0}}^nv_k\in\Wgt,\]
where $M=\{v_1,\dots ,v_n\}$ and $\gamma:=\vert L/\langle M\rangle_\Z\vert$, we have $\ell-y_\ell\in\Wgeq$ for all $\ell\in L\cap P(M)\cap \Wgt$. Since there can only be finitely many such vectors $y_\ell$, the claim follows.

\underline{$(2)\Rightarrow (1)$} Define $D_0:=\tilde\FF\cap D\subseteq\tilde\FF\cap L$. By assumption we have that for any $d\in D$ there is a $\omega\in\Omega$ such that $d\w^{ad}\in D_0$ and that there is a $y_0\in\Wgt$ with $d-y_0\in\Wgeq$ for all $d\in D_0$. This already implies the finiteness of the residue class graph since there are but finitely many $x\in P_D$ such that $M_D(x)\cap D_0\neq \emptyset$. This follows from the fact that $\sigma(x,x)$ is bounded from above (\cite[p. 400]{Koe60}, \cite[Lemma 3.3.2]{Mert12}) and that $P_D$ is discrete (\cite[Lemma 1.6]{Opg01}).
\end{proof}
The condition $(2)$ in Theorem\nobreakspace \ref {theofundfin} is fulfilled for example if there is a finite set $M\subset D$ such that the pyramide $P(M)=\{\sum_{d\in M}\alpha_dd |\alpha_d\geq 0\}$ contains a fundamental domain for the action of $\Omega^{ad}$ (cf. \cite[Proposition 2.5]{Opg01}).

Geometrically this condition $(2)$ means that you can shift the pseudo fundamental domain $\tilde\FF$ a little bit towards the origin and the lattice points contained in it stay in $\Wgeq$. In particular this implies that $\tilde\FF$ can be embedded in a pyramide $P(M)$ such that at most finitely many of its rays lie in the cone's boundary.
\section{Hyperbolic Lattices}\label{secHyp}
In this section we are going to give detailed and explicit methods how to apply \Cref{alg} to compute generators for the automorphism group of a hyperbolic lattice. For this purpose, we first have to find a pair of dual cones with respect to a bilinear form.

Throughout this section, we consider integral lattices $(L,\Phi)$ where $\Phi$ is a bilinear form of signature $(n-1,-1)$. We also assume that there is a fixed basis $B$ of $L$ and we set $A:={}_B\Phi^B\in\Z^{n\times n}$. The set of real (integral) hyperbolic matrices is denoted by $\Rhyp$ ($\Zhyp$).

With respect to this basis, we identify $\V:=\R\otimes_\Z L\cong\R^n$. Therefore we will not always distinguish strictly between the lattice $(L,\Phi)$ and its Gram matrix $A$. 
\subsection{Hyperbolic Lattices and Dual Cones}
\begin{lemma}\mbox{}\\
Consider the standard hyperbolic matrix 
\[H:=diag(-1,1,\dots 1)\in\Zhyp.\]
Then the set 
\begin{align}
\mathcal{C}:=\{(x_1,\dots ,x_n)\in\R^n\,\mid\, xHx^{tr}<0\text{ and }x_1>0\}\label{eqC}
\end{align}
is a selfdual cone with respect to the standard scalar product.
\end{lemma}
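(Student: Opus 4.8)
The plan is to verify the two defining conditions (DC.1) and (DC.2) for the pair $(\calC,\calC)$ directly, working with the standard scalar product $\sigma(x,y)=xy^{tr}$ on $\R^n$. It is convenient to split every vector as $x=(x_1,x')$ with $x'\in\R^{n-1}$, so that $xHx^{tr}=-x_1^2+\lVert x'\rVert^2$ and $x\in\calC$ means $x_1>\lVert x'\rVert\ge 0$; in particular $x_1>0$ and $\lVert x'\rVert<x_1$. Thus $\calC$ is the open forward light cone. First I would record the elementary observation that $\calC$ is non-empty and open: it contains $e_1=(1,0,\dots,0)$, and it is cut out by the strict inequalities $x_1>0$ and $\lVert x'\rVert^2<x_1^2$, both defining open sets.

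For (DC.1), take $x=(x_1,x')$ and $y=(y_1,y')$ in $\calC$. Then $\sigma(x,y)=x_1y_1+x'\cdot y'\ge x_1y_1-\lVert x'\rVert\,\lVert y'\rVert$ by Cauchy–Schwarz, and since $\lVert x'\rVert<x_1$ and $\lVert y'\rVert<y_1$ with $x_1,y_1>0$, this is strictly positive. For (DC.2), by symmetry it suffices to treat one side: given $x\in\R^n\setminus\calC$, I must produce $y\in\calC^{\geq 0}\setminus\{0\}$ with $\sigma(x,y)\le 0$, where $\calC^{\geq 0}=\{(x_1,x'): x_1\ge\lVert x'\rVert\}$ is the closed forward cone (I would check this description of the closure, which follows since the inequalities are non-strict in the limit and the sign condition $x_1\ge 0$ is forced by $x_1\ge\lVert x'\rVert\ge 0$). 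Writing $x=(x_1,x')$, the assumption $x\notin\calC$ means either $x_1\le 0$, or $x_1>0$ but $\lVert x'\rVert\ge x_1$. In the first case take $y=e_1\in\calC^{\geq 0}$, giving $\sigma(x,y)=x_1\le 0$. In the second case set $y=(\lVert x'\rVert,-x')$; then $y\in\calC^{\geq 0}$ (its first coordinate equals the norm of its last $n-1$ coordinates) and $y\neq 0$ since $x'\neq 0$, and $\sigma(x,y)=x_1\lVert x'\rVert-\lVert x'\rVert^2=\lVert x'\rVert(x_1-\lVert x'\rVert)\le 0$.

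The only genuinely delicate point is (DC.2), and within it the boundary case $x_1>0$, $\lVert x'\rVert=x_1>0$: here $x$ itself lies on $\partial\calC$, and one must be slightly careful that the chosen $y$ is a genuine element of the closed cone and is nonzero. The choice $y=(\lVert x'\rVert,-x')$ handles this uniformly, since it is manifestly on the boundary of $\calC$ and nonzero whenever $x'\neq 0$; if $x'=0$ and $x_1>0$ then $x=x_1e_1\in\calC$, contradicting $x\notin\calC$, so this sub-case does not arise. Finally, self-duality (as opposed to the weaker statement that $\calC$ has \emph{some} dual cone) is exactly the assertion that the same cone $\calC$ works in both slots of (DC.1) and (DC.2), which is what the argument above establishes; no separate computation of $\calC^\ast$ is needed.
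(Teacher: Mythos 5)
Your proof is correct and follows essentially the same route as the paper: both verify (DC.1) and (DC.2) by direct computation, and your witness $y=(\lVert x'\rVert,-x')$ for (DC.2) is, up to positive rescaling, exactly the paper's choice $y=(1,-x_2,\dots,-x_n)$ after normalizing $\sum_{i\geq 2}x_i^2=1$. The only cosmetic difference is in (DC.1), where you invoke Cauchy--Schwarz while the paper normalizes $x_1=y_1$ and rewrites $2xy^{tr}$ as $-xHx^{tr}-yHy^{tr}+\sum_{i\geq 2}(x_i-y_i)^2$ --- two packagings of the same elementary inequality.
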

\begin{proof}
Obviously, $\mathcal{C}$ is an open, nonempty set. 

To prove the property (DC.1), let $x=(x_1,\dots ,x_n),y=(y_1,\dots ,y_n)\in\calC$. Without loss of generality, we may assume that $x_1=y_1$, since we can rescale $y$ by a positive real number. We calculate
{\allowdisplaybreaks
\begin{align*}
xy^{tr}&=x_1y_1+\sum\limits_{i=2}^n x_iy_i\\
       &=\frac{1}{2}(x_1^2+y_1^2)+\sum\limits_{i=2}^n \frac{1}{2}((x_i+y_i)^2-x_1^2-y_i^2)\\
       &=\frac{1}{2}\left( (x_1^2-\sum\limits_{i=2}^n x_i^2)   +(y_1^2-\sum\limits_{i=2}^n y_i^2)+\sum\limits_{i=2}^n(x_i-y_i)^2 \right)\\
       &=\frac{1}{2}\left(\underbrace{-xHx^{tr}}_{>0}+\underbrace{(-yHy^{tr})}_{>0}+\underbrace{\sum\limits_{i=2}^n(x_i-y_i)^2 }_{\geq 0}\right)>0
\end{align*}
}
hence (DC.1) is shown.

Regarding (DC.2) let $x=(x_1,\dots ,x_n)\not\in\calC$. If $x_1\leq 0$, then $y=(1,0,\dots ,0)\in\calC$ fulfills $xy^{tr}=x_1\leq 0$. Thus suppose $x_1>0$. By definition of $\calC$ we know that $-x_1^2+ s\geq 0$ for $s:=\sum_{i=2}^n x_i^2$, hence especially $s>0$. As above we may therefore assume $s=1$. This yields $0< x_1^2\leq 1$ . Now define $y=(1,-x_2,\dots , -x_n)$. It holds $yHy^{tr}=0$ and $y_1=1>0$, thus $y\in\overline\calC$ and $xy^{tr}=x_1-1\leq 0$ as desired.
\end{proof}
We remark here that for dual cones $\Vgt,\Wgt$ with respect to a bilinear form $\sigma$ the sets $\Vgt g$ and $\Wgt (g^{ad})^{-1}$ are dual cones with respect to $\sigma$ for every $g\in\GL(\V)$.

Now Sylvester's Law of Inertia states that for any $A\in\Rhyp$ there is a $T\in\GL_n(\R)$ such that $TAT^{tr}=H$. Thus we obtain that for any such $T$
\begin{align}
\Vgt&:=\calC T^{-1}=\{x\in\R^n\,\mid\, xAx^{tr}<0 \text{ and }xy_1^{tr}>0\}\label{eqcones1}\\
\Wgt&:=\calC T^{tr}=\{x\in\R^n\,\mid\, xA^{-1}x^{tr}<0 \text{ and }xy_2^{tr}>0\}\label{eqcones2},
\end{align}
are dual cones with respect to the standard scalar product, where $y_1=e_1T^{-1}$ and $y_2=e_1T^{tr}$.
\begin{remark}\mbox{}\\
Note that the dual cones $\Vgt$ and $\Wgt$ in \eqref{eqcones1} and \eqref{eqcones2} depend on the choice of $T$, but the second output of \Cref{alg} being a generating system for the group $\Omega$ does not depend on the choice of the dual cones, as long as $\Omega$ acts on them. 

In fact $\Omega=\Aut_\Z(A)$ does \emph{not} act on $\Vgt$ because one has $-I_n\in\Aut_\Z(A)$ for all symmetric matrices $A$ but certainly $\Vgt\neq\Vgt (-I_n)$. But $\Aut_\Z(A)$ acts on $\Vgt\cup(-\Vgt)$ and thus we can restrict ourselves to the stabilizer of $\Vgt$ in $\Aut_\Z(A)$ which from now on will be denoted by $\Omega$. We shall call $\Omega$ the \emph{reduced} automorphism group of $A$ or $L$ respectively. Obviously we have $\Aut_\Z(A)=\Omega\times\langle -I_n\rangle$.
\end{remark}
\begin{lemma}\mbox{}\\
The group $\Omega$ acts properly discontinuously on $\Vgt$.
\end{lemma}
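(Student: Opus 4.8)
The plan is to verify that $\Omega$ satisfies the definition of a properly discontinuous action on $\Vgt$: for every compact subset $K \subseteq \Vgt$, the set $\{\w \in \Omega \mid K\w \cap K \neq \emptyset\}$ is finite. First I would recall that $\Omega \leq \Aut_\Z(A) \leq \GL_n(\Z)$, so $\Omega$ is a \emph{discrete} subgroup of $\GL_n(\R)$; the task is therefore to show that the stabilizer in $\Omega$ of a compact subset of $\Vgt$ meets only finitely many cosets, which for a discrete group reduces to showing that any $\Omega$-orbit accumulates nowhere in $\Vgt$, equivalently that the set of $\w \in \Omega$ moving a fixed compact $K$ into itself is relatively compact in $\GL_n(\R)$ (and hence finite, being also discrete).

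The key geometric input is that $\Omega$ preserves the quadratic form $Q(x) := xAx^{tr}$ and preserves the cone $\Vgt$, on which $Q$ is negative. I would fix a point $x_0 \in \Vgt$, say one coming from the first standard basis vector under the transformation $T$ that diagonalizes $A$ to $H$, and consider the "level set" $S_c := \{x \in \Vgt \mid Q(x) = c\}$ for the constant $c = Q(x_0) < 0$. Since $\Omega$ preserves $Q$ and $\Vgt$, it acts on each such $S_c$. Transporting the problem via $T$ to the standard model $\calC$ and the form $H$, the relevant level set becomes a sheet of the hyperboloid $\{-x_1^2 + x_2^2 + \dots + x_n^2 = c,\ x_1 > 0\}$, which is a classical model of hyperbolic $(n-1)$-space, and the stabilizer of $H$ fixing this sheet is (conjugate to) $O(n-1,1)^+$ acting by isometries. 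The point is that the map $O(n-1,1)^+ \to \mathrm{Isom}(\mathbb{H}^{n-1})$ is proper: the stabilizer of any point of the hyperboloid is a maximal compact subgroup $\cong O(n-1)$, and preimages of compact sets of isometries are compact. Therefore the subgroup of $\GL_n(\R)$ preserving $A$, $\Vgt$, and mapping a compact $K$ into itself is contained in a compact subset of $\GL_n(\R)$.

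Combining the two facts — $\Omega$ discrete in $\GL_n(\R)$ and the set of elements moving $K$ into $K$ relatively compact — yields that this set is finite, which is exactly proper discontinuity. The main obstacle, and the only genuinely nontrivial point, is the properness of the action of $O(n-1,1)^+$ on the hyperboloid sheet (equivalently of the isometry action on $\mathbb{H}^{n-1}$); everything else is bookkeeping with $T$ and the observation that $\GL_n(\Z)$ is discrete. One clean way to package the properness statement without invoking hyperbolic geometry explicitly is: if $g \in \GL_n(\R)$ preserves $H$ and sends a point $p$ on the upper hyperboloid sheet to a point $pg$ in a fixed compact set, then the columns of $g$ are constrained — $g$ preserves the Lorentz form and $g$ maps the timelike vector $p$ near a fixed timelike vector, which pins down $g$ up to the compact group $O(n-1)$ stabilizing $p$ — so $g$ ranges over a compact set. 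I would write out this linear-algebra version rather than the hyperbolic-geometry version, since it keeps the proof self-contained.
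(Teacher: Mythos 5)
Your proposal is correct and rests on the same two pillars as the paper's proof: the discreteness of $\Omega$ as a subgroup of $\GL_n(\Z)$, and the fact that the real automorphism group of $A$ acts transitively on the hyperboloid level sets $\{x\in\Vgt\mid xAx^{tr}=c\}$ with compact point stabilizers (conjugate to $O(n-1)$). The paper packages this via the cluster-point formulation (an accumulating orbit would force a convergent, hence eventually constant, sequence in the discrete group $\Omega$, with finiteness of stabilizers supplied separately by \Cref{lemStab}), whereas you use the compact-set criterion directly; the difference is only one of presentation.
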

\begin{proof}
Suppose that there is a cluster point $x^*$ of the orbit $x\Omega$ of some $x\in\Vgt$. Then there is a sequence $(\w_k)_{k\in\N}$ in $\Omega$ such that $x\omega_k$ tends to $x^*$ as $k$ tends to $\infty$. Obviously we have $xAx^{tr}=(x\w_k)A(x\w_k)^{tr}$ for all $k$, hence also $x^*A{x^*}^{tr}=xAx^{tr}$. Since the real automorphism group modulo $-I_n$ denoted $\Omega_\R$ acts transitively on the set $\{x\in\Vgt\,\mid\,xAx^{tr}=c\}$ for any given number $c>0$, there must be an $\omega^*\in\Omega_\R$ such that $x\w^*=x^*$. By choosing the right representatives modulo $\Stab_{\Omega_\R}(x^*)$ we may assume that $\omega_k\rightarrow \omega^*$, but because $\Omega$ is discrete this means that the sequence $(\w_k)_{k\in\N}$ and therefore the sequence $(x\w_k)_{k\in \N}$ becomes constant at some point.

As will be shown in \Cref{lemStab}, the stabilizer of any $x\in\Vgt$ is isomorphic to a subgroup of the automorphism group of a positive definite lattice, thus it must be finite.

Therefore the action of $\Omega$ on $\Vgt$ is properly discontinuous. 
\end{proof}
\subsection{Admissibility}
For the rest of this paper let
\begin{align} 
D:=\Z^n\cap\Wgeq\setminus\{0\}.
\label{eqD}
\end{align}
This set is obviously discrete and $\Omega^{ad}$-invariant. In this subsection we want to prove that $D$ is admissible, which is less obvious.
\begin{remark}\label{corMink}\mbox{}\\
For every $\eps>0$ and $x\in\R^n$ the set
\[P_\eps(x):=\{y\in\R^n\,\mid\, 0\leq xy^{tr} \leq \eps\}\]
contains a point $\ell\neq 0$ with integral coordinates.
\end{remark}
\begin{proof}
For given $\eps>0$ and $x\in\R^n$ consider
\[X:=\{y\in\R^n\,\mid\,-\eps\leq xy^{tr}\leq\eps\}.\]
This set is clearly centrally symmetric and convex and has infinite volume. Thus by Minkowski's Convex Body Theorem (\cite[Satz 4.4]{Neu}) we know that $X\cap \Z^n\neq\{0\}$. For $\ell\in X\cap\Z^n\setminus\{0\}$ we either have $x\ell^{tr}\geq 0$ and thus $\ell\in P_\eps(x)$ or $x\ell^{tr}\leq 0$ and thus $-\ell\in P_\eps(x)$.
\end{proof}
In addition we use a famous result by Lov\'{a}sz. To understand it we need the following definition (cf. \cite{Lov}).
\begin{definition}\mbox{}
\begin{enumerate}[(i)]
\item Let $S\subseteq\R^n$ convex. We call $S$ a \emph{maximal lattice-free convex set}, for short \emph{MLFC-set}, if it holds that
\begin{enumerate}
\item\label{p1} the relative interior of $S$ with respect to $\langle S\rangle_\R$ does not contain points with integral coordinates,
\item $S$ is maximal (with respect to inclusion) among all convex subsets $T\subseteq\R^n$ fulfilling property (a).
\end{enumerate}
\item Let $\mathcal{U}$ be an affine subspace of $\R^n$. If the integral points in $\mathcal{U}$ generate $\mathcal{U}$ as an affine space, i.e.
\[\langle \mathcal{U}\cap\Z^n\rangle_{aff}=\mathcal{U},\]
then we call $\mathcal{U}$ a \emph{rational subspace} of $\R^n$. If not, $\mathcal{U}$ is called \emph{irrational}.
\end{enumerate}
\end{definition}
\begin{theorem}\label{theoLovasz}(Lov\'asz, 1989)\\
Let $S\subseteq\R^n$. It holds that $S$ is an MCLF-set if and only if one of the following conditions hold:
\begin{enumerate}
\item\label{L1} $S$ is a convex polyhedron of the form $S=P+\calL$, where $P$ is a polytope and $\calL$ is a rational vector space, whereby it holds that $\dim\langle P\rangle+\dim\calL=n$ and $S^\circ\cap\Z^n=\emptyset$ and every surface of $S$ contains an integral point in its relative interior.
\item $S$ is an irrational affine hyperplane of $\R^n$ of dimension $n-1$.
\end{enumerate}
\end{theorem}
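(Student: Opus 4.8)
The plan is to prove the two implications separately; the ``if'' direction is elementary and the ``only if'' direction is where the work lies. For ``if'': if $S$ is an irrational hyperplane then $S$ contains no lattice point, while every open slab around $S$ meets $\Z^n$ (the defining linear form of $S$ takes values dense in $\R$ on $\Z^n$); since any convex $S'\supsetneq S$ contains such a slab, $S'$ is not lattice-free, so $S$ is maximal. If instead $S=P+\calL$ is of the first type, then $\dim S=n$ and $\operatorname{int}(S)\cap\Z^n=\emptyset$ by hypothesis, so $S$ is lattice-free; and given a convex $S'\supsetneq S$, choose $p\in S'\setminus S$ and a facet inequality $a^{tr}x\le b$ of $S$ with $a^{tr}p>b$. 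The facet $F=S\cap\{a^{tr}x=b\}$ carries a lattice point $v$ in its relative interior, near which $S$ coincides with the half-space $\{a^{tr}x\le b\}$; hence $\operatorname{conv}(\{p\}\cup S)\subseteq S'$ contains a full neighbourhood of $v$ in $\R^n$, so $v\in\operatorname{int}(S')\cap\Z^n$ and $S$ is maximal.

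For ``only if'', let $S$ be maximal lattice-free. First I would replace $S$ by its closure, which has the same relative interior, hence is still lattice-free and so equals $S$ by maximality; thus $S$ is closed. Then I would distinguish cases by $\dim S$. If $\dim S<n$, set $A:=\langle S\rangle_{aff}$; elementary perturbation arguments show that $A$ must be a hyperplane containing no lattice point. Indeed, if $\dim A\le n-2$, or if $A$ contains a lattice point (necessarily on the relative boundary of $S$ in $A$), then $S$ can be strictly enlarged to a lattice-free convex set — by thickening $S$ inside a generic hyperplane through $A$, respectively by moving towards a generic point off $A$, one avoids the countably many configurations producing an interior lattice point — contradicting maximality. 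Hence $A$ is an irrational hyperplane and $S=A$, the second alternative.

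The heart of the matter is $\dim S=n$, where lattice-freeness reads $\operatorname{int}(S)\cap\Z^n=\emptyset$. For each $\ell\in\Z^n$ choose a closed half-space $H_\ell\supseteq S$ with $\ell\notin\operatorname{int}(H_\ell)$ — supporting at $\ell$ if $\ell\in\partial S$, separating otherwise — so that $\bigcap_{\ell}H_\ell$ is a closed convex set containing $S$ with no interior lattice point; by maximality $S=\bigcap_\ell H_\ell$. Doignon's theorem (the integer analogue of Helly's theorem, with Helly number $2^n$) applied to the open half-spaces $\operatorname{int}(H_\ell)$ then shows that at most $2^n$ of them already intersect in a set disjoint from $\Z^n$; intersecting the corresponding closed half-spaces gives a polyhedron $Q\supseteq S$ with $\operatorname{int}(Q)\cap\Z^n=\emptyset$, so $S=Q$ is polyhedral with at most $2^n$ facets. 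That every facet carries a lattice point in its relative interior is again forced by maximality: otherwise relaxing the corresponding inequality $a^{tr}x\le b$ to $a^{tr}x\le b+\delta$ with $\delta>0$ small would enlarge $S$ while keeping it lattice-free.

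It remains to establish the decomposition $S=P+\calL$. Writing $S=\{x:a_i^{tr}x\le b_i,\ i=1,\dots,m\}$, with lineality space $\calL=\{x:a_i^{tr}x=0\ \forall i\}$ and recession cone $R=\{x:a_i^{tr}x\le0\ \forall i\}$: if $r\in R$ is rational then $S+\R r$ is still lattice-free — an interior lattice point of $S+\R r$ slides along a suitable integer multiple of $\pm r$ into $\operatorname{int}(S)$ — so maximality forces $r\in\calL$; hence all rational recession directions lie in $\calL$, and $\calL$ is a rational subspace. Passing to $\R^n/\calL$, where the image of $S$ is again maximal lattice-free but now has no nonzero rational recession direction, one concludes this image is bounded, a polytope $P$, and $\dim\langle P\rangle+\dim\calL=\dim S=n$. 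I expect the main obstacle to be precisely this last step together with polyhedrality: using maximality to rule out \emph{all} recession directions outside $\calL$ — not merely the rational ones — is the delicate point, and it is exactly this that makes the quotient a genuine polytope rather than again a set of the form $P+\calL'$.
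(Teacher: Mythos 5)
The paper does not actually prove this statement: its ``proof'' is the single line ``A proof of this can be found in \cite{Lov}''. Your sketch must therefore be measured against the argument in that reference, and for the full-dimensional case it does follow the same route in outline (maximality forces closedness, Doignon--Bell--Scarf gives polyhedrality with at most $2^n$ facets, maximality forces a lattice point in the relative interior of each facet, and a recession-cone analysis yields $S=P+\calL$ with $\calL$ rational). One concrete gap there: Doignon--Bell--Scarf is a theorem about \emph{finite} families of convex sets, and you invoke it for the infinite family $\{\operatorname{int}(H_\ell)\}_{\ell\in\Z^n}$. The naive infinite version is false (in $\R^1$ the half-lines $(k,\infty)$, $k\in\N$, have empty, hence integer-free, intersection, while every two of them share integers), so the reduction to finitely many half-spaces --- which is the actual technical core of the polyhedrality proof in \cite{Lov} --- is missing rather than merely compressed.

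More seriously, the low-dimensional case does not work as you describe, and the failure exposes the fact that the statement as reproduced in the paper is not the one proved in \cite{Lov}. With the paper's definition of lattice-free via the \emph{relative} interior, the ray $S=\{t(1,\sqrt 2)\,:\,t\ge 0\}\subset\R^2$ is a maximal lattice-free convex set: its relative interior misses $\Z^2$; every convex enlargement inside the line $\R(1,\sqrt 2)$ puts the origin into the relative interior; and every two-dimensional enlargement contains an open half-strip in the direction $(1,\sqrt 2)$, whose interior meets $\Z^2$ because $\Z+\sqrt 2\,\Z$ is dense in $\R$. So ``moving towards a generic point off $A$'' fails --- here \emph{every} direction is bad --- and this $S$ is neither a hyperplane nor of the form in case (1). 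The theorem in \cite{Lov} uses the interior (not the relative interior) in the definition of lattice-free, and case (2) requires the hyperplane to have irrational \emph{direction}, so that its defining form has dense values on $\Z^n$; the paper's weaker condition (integral points do not affinely span) does not imply either of the facts your ``if'' argument uses, namely that the hyperplane contains no lattice point and that the form has dense values --- both fail for $\{x\in\R^3 : x_3=\sqrt 2\}$, which is ``irrational'' in the paper's sense but extends to a lattice-free slab and hence is not maximal. Any complete proof has to begin by correcting these definitions; as stated, the equivalence you are asked to prove is false in both directions.
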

A proof of this can be found in \cite{Lov}.
\begin{remark}\mbox{}\\
Let $S=P+\calL$ an MCLF-set as in \Cref{theoLovasz},\ref{L1}. Then $P$ must be a bounded polytope since the surfaces of $S$ contain an integral point in their relative interior. By adding suiting integral points if necessary one gets a basis of a full-rank sublattice of $\Z^n$ which has a fundamental polytope $F$. The projection of $F$ onto $\langle P\rangle$ must be contained properly in $P$ if $P$ is unbounded which yields a contradiction to $S$ being lattice-free.  
\end{remark}
We can now prove the following:
\begin{proposition}\label{propDadm}\mbox{}\\
The set $D$ from equation \eqref{eqD} is admissible.
\end{proposition}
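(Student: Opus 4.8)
The plan is to argue by contradiction, using Lov\'asz's classification of maximal lattice-free convex sets (\Cref{theoLovasz}). We must produce, for every $x\in\partial\Vgt$ and every $\eps>0$, a $d\in D$ with $xd^{tr}<\eps$. I would first dispose of the case $x=0$ (any element of the non-empty set $D$ works), so assume $x\neq0$; since $\partial\Vgt\subseteq\{y\in\R^n : yAy^{tr}=0\}$, the vector $x$ is isotropic for $A$. Next I would fix the conjugate isotropic vector $y_0:=-xA$ (choosing the sign so that $y_0\in\Wgeq$). Then $xy_0^{tr}=-xAx^{tr}=0$ and $y_0A^{-1}y_0^{tr}=xAx^{tr}=0$, and since $x^{tr}=-A^{-1}y_0^{tr}$ we have $x^\perp=\{y : yA^{-1}y_0^{tr}=0\}$, the tangent hyperplane of the light cone of $A^{-1}$ at the isotropic vector $y_0$; such a tangent hyperplane meets that cone only along the line $\R y_0$, so
\[\Wgeq\cap x^\perp=\R_{\geq 0}\,y_0 .\]

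Suppose now that $D$ is not admissible. Then there are $x\in\partial\Vgt\setminus\{0\}$ and, after shrinking, an $\eps>0$ with $xd^{tr}\geq\eps$ for all $d\in D$. Consider the closed convex set $S:=\Wgeq\cap\{y\in\R^n : xy^{tr}\leq\tfrac\eps2\}$. Since property (DC.1) forces $xy^{tr}\geq0$ on $\Wgeq$, a non-zero lattice point of $S$ would be an element of $D$ with $xd^{tr}\leq\tfrac\eps2<\eps$; hence $S\cap\Z^n=\{0\}$. Moreover $S$ is $n$-dimensional (it contains the non-empty open set $\Wgt\cap\{xy^{tr}<\tfrac\eps2\}$), its recession cone equals $\Wgeq\cap x^\perp=\R_{\geq0}y_0$, and $0\in\partial S$; hence the interior of $S$ is lattice-free. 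Therefore $S$ is contained in a maximal lattice-free convex set $M$, and by \Cref{theoLovasz} and the remark following it — the irrational-hyperplane alternative being excluded because $M\supseteq S$ is full-dimensional — we may write $M=P+\calL$ with $P$ a \emph{bounded} polytope and $\calL$ a rational subspace with $\dim\langle P\rangle+\dim\calL=n$. Since $P$ is bounded, $\calL$ is the recession cone of $M$, so $\R_{\geq0}y_0\subseteq S\subseteq M$ gives $y_0\in\calL$.

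It remains to convert ``$y_0$ lies in the rational subspace $\calL$'' into a contradiction, and here I would induct on $\dim\calL$. Inside $\calL$ (where $\calL\cap\Z^n$ is a full lattice) the set $S\cap\calL$ is a convex set with lattice-free relative interior whose recession cone still contains $\R_{\geq0}y_0$; applying \Cref{theoLovasz} inside $\calL$ produces a strictly smaller rational subspace $\calL'\subsetneq\calL$ containing $y_0$ (one cannot have $\calL'=\calL$, as a full rational subspace has interior lattice points). Iterating, the dimension strictly decreases until $\R y_0$ itself is rational; then the primitive integer vector $d$ on the ray $\R_{\geq0}y_0\subseteq\Wgeq$ lies in $D$ with $xd^{tr}=0<\eps$, the desired contradiction. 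The step I expect to be the main obstacle is making this induction rigorous in the degenerate case $\calL\subseteq x^\perp$: there $A^{-1}$ restricts to a positive semidefinite form on $\calL$, the set $S\cap\calL$ collapses to the ray $\R_{\geq0}y_0$ and need not be full-dimensional in $\calL$, so the reduction inside $\calL$ may land in the irrational-hyperplane branch of \Cref{theoLovasz}; in that case one has to replace the ``polytope plus rational subspace'' step by a direct analysis of how the lattice $\calL\cap\Z^n$ and the light cone of $A^{-1}$ sit relative to $\R_{\geq0}y_0$.
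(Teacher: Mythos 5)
Your setup is exactly the paper's: argue by contradiction, embed the lattice-free convex set $\Wgeq\cap\{v\mid xv^{tr}\leq\eps/2\}$ into an MLFC-set $M=P+\calL$ via \Cref{theoLovasz}, and use the recession cone (which is precisely $\Wgeq\cap x^{\perp}=\R_{\geq 0}y_0$ with $y_0=-xA$, as you correctly compute) to conclude $y_0\in\calL$. Up to that point the argument is sound. The gap is in your endgame. You try to upgrade ``$y_0$ lies in the rational subspace $\calL$'' to ``$\R y_0$ is rational'' by iterating Lov\'asz inside $\calL$, and you yourself flag that this breaks when $\calL$ is tangent to the light cone of $A^{-1}$, i.e.\ $\calL\subseteq x^{\perp}$: there $S\cap\calL$ collapses to the single ray $\R_{\geq 0}y_0$, the MLFC-set of $(\calL,\calL\cap\Z^n)$ containing it may well be an \emph{irrational} hyperplane, and inside an irrational hyperplane the induced lattice is no longer full, so \Cref{theoLovasz} gives nothing further. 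This is not a removable corner case: as $\dim\calL^{(k)}$ descends towards $1$ the subspace is forced ever closer to the tangent line $\R y_0$, so the tangency situation is essentially unavoidable, and the induction does not close. As written, the proof is incomplete.

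The paper never needs $\R y_0$ to be rational; the missing idea is to use $y_0\in\calL$ to \emph{translate}. For any $w$ in the affine hyperplane $T_{\eps'}(x)=\{v\mid xv^{tr}=\eps'\}$ with $0<\eps'\leq\eps$ one has $w+\lambda y_0\in\Wgeq$ for all large $\lambda$, since $(w+\lambda y_0)A^{-1}(w+\lambda y_0)^{tr}=wA^{-1}w^{tr}-2\lambda\eps'\to-\infty$ (using $y_0A^{-1}y_0^{tr}=xAx^{tr}=0$ and $wA^{-1}y_0^{tr}=-xw^{tr}$) while the sign condition $vy_2^{tr}>0$ is eventually satisfied because $y_0y_2^{tr}>0$. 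Hence $w+\lambda y_0\in M_\eps\subseteq M$, and since $M=M+\calL$ and $y_0\in\calL$ this gives $w\in M$. Therefore the whole open slab $\{v\mid 0<xv^{tr}<\eps\}$ lies in $M^\circ$, and \Cref{corMink} produces the contradiction: either some nonzero integral $\ell$ has $x\ell^{tr}\in(0,\eps)$, contradicting $M^\circ\cap\Z^n=\emptyset$, or the group $\{xv^{tr}\mid v\in\Z^n\}$ is discrete, in which case $x$, hence $y_0=-xA$, is proportional to an integral vector and the primitive integral point on $\R_{>0}y_0$ lies in $D$ with $xd^{tr}=0<\eps$ --- which is the easy half of your own final step. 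I recommend replacing your induction on $\dim\calL$ by this translation argument.
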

\begin{proof}
Let $x\in\partial\Vgt$. By \cite[Lemma 1.1]{Opg01} there is a $y\in\Wgeq\setminus\{0\}$ such that $xy^{tr}=0$. Now let $\eps >0$ and define
\[M_\eps:=\{v\in\Wgeq\setminus\{0\}\,\mid\, xv^{tr}\leq\eps\}.\]
We have to show that $M_\eps\cap D\neq\emptyset$.

Assume on the contrary that this were false. Then, since $M_\eps$ is convex, there must be an MCLF-set $S\subset\R^n$ with $M_\eps\subseteq S$. Obviously, $S$ cannot be an affine hyperplane, hence $S=P+\calL$ as in \Cref{theoLovasz},\ref{L1}. Since $P$ is bounded, it can't hold that $y\in P$, thus $y=p+\ell$ for some $p\in P$, $\ell\in\calL\setminus\{0\}$. For reasons of dimension, $p$ and $\ell$ are in fact unique. Now let $\lambda>0$. Thus $\lambda y=\lambda p+\lambda \ell$, but since $P$ is bounded and $\lambda y\in M_\eps\subseteq S=P+\calL$, this cannot hold for big values of $\lambda$ if $p\neq 0$. Thus $y\in\calL$.

Now consider for $0\leq\eps'\leq\eps$ the set
\[T_{\eps'}(x):=\{v\in\R^n\,\mid\, xv^{tr}=\eps'\}.\]
Then for each such $\eps'$ we can find some $v_{\eps'}\in\Wgeq\cap T_{\eps'}(x)$, hence we can write
\[T_{\eps'}(x)=v_{\eps'}+T_0(x)\] 
and thus 
\[P_\eps=\bigcup\limits_{0\leq\eps'\leq\eps} (v_{\eps'}+T_0(x)).\]
For fixed $0\leq \eps'\leq \eps$ let $v_{\eps'}=p'+\ell'$ where $p\in P$ and $\ell\in\calL$. Now for every $y\in T_0(x)$, there is some $\lambda_0> 0$ such that
\[v_{\eps'}+\lambda y\in\Wgeq\text{ for all }\lambda\geq\lambda_0.\]
We can decompose $v_{\eps'}+\lambda_0 y=p+\ell$ with $p\in P$, $\ell\in\calL$, thus we have 
\[\lambda_0 y=(p-p')+(\ell-\ell').\]
For $\lambda\geq\lambda_0$ this implies that
\[v_{\eps'}+\lambda y=\underbrace{p'+\frac{\lambda}{\lambda_0} (p-p')}_{\in P}+\underbrace{\ell'+\frac{\lambda}{\lambda_0}(\ell-\ell')}_{\in\calL}\]
which can only hold if $p=p'$ because $P$ is bounded. But this means that $y\in\calL$ and hence it follows $v_{\eps'}+T_0(x)\subseteq P+\calL$ for all $\eps'$, and therefore $P_\eps(x)\subseteq S$ which is a contradiction to \Cref{corMink}.
\end{proof}
\subsection{$D$-Minimal Vectors and Automorphisms}
Now that we have found the dual cones and an admissible set $D$, we give explicit methods to calculate the $D$-minimal vectors of a point $x\in\Vgt$ as well as its stabilizer and connecting elements. 

As stated before, we consider $A\in\Zhyp$ a given hyperbolic matrix with a fixed pair of dual cones $\Vgt,\Wgt$ and cone test vectors $y_1,y_2$ as in equations \eqref{eqcones1} and \eqref{eqcones2} and $D$ as in equation \eqref{eqD}.
\begin{remark}\mbox{}
\begin{enumerate}[(i)]
\item For any $x\in\Vgt$, it holds that $-xA\in\Wgt$.
\item Let $x\in\Vgt\cap\Z^n$. Then we have $\mu_D(x)\leq -xAx^{tr}=:N(x)$.
\end{enumerate}
\end{remark}
\begin{proof}
$(i)$ is clear for $A=H$ with $H=diag(-1,1,\dots ,1)$ and in general follows from Sylvester's Law of Inertia.

$(ii)$ follows from $(i)$ because $-xA\in D$ and thus $\mu_D(x)\leq x(-xA)^{tr}=-xAx^{tr}$.
\end{proof}
\begin{lemma} \mbox{}\\
Let $x\in\Vgt\cap\Z^n$ and $\lambda>0$ and define the affine hyperplane
\[H_\lambda(x):=\{-\lambda xA+y\in\R^n\,\mid\,xy^{tr}=0\}.\]
If $\lambda$ is minimal such that the finite set
\[M_\lambda=\{d\in H_\lambda (x)\cap\Z^n\,\mid\, (d+\lambda xA)A^{-1}(d+\lambda xA)^{tr}\leq \lambda^2xAx^{tr}\text{ and }y_2d^{tr}\geq 0\}\]
is nonempty, it holds that
\[M_\lambda=M_D(x).\]
\end{lemma}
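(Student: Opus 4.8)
The plan is to show the two inclusions $M_D(x)\subseteq M_\lambda$ and $M_\lambda\subseteq M_D(x)$, while simultaneously identifying the minimal $\lambda$ with the $D$-minimum $\mu_D(x)$. The key observation is a reparametrisation: since $A$ is non-degenerate, any $d\in\R^n$ can be written as $d=-\lambda xA + y$ where $\lambda = xd^{tr}/N(x)$ (with $N(x)=-xAx^{tr}>0$) and $y:=d+\lambda xA$ satisfies $xy^{tr}= xd^{tr} + \lambda\, xAx^{tr} = xd^{tr} - \lambda N(x) = 0$. So the hyperplanes $H_\lambda(x)$, as $\lambda$ ranges over positive reals, foliate $\R^n$ by the level sets of the linear functional $d\mapsto xd^{tr}$: namely $d\in H_\lambda(x)$ exactly when $xd^{tr}=\lambda N(x)$. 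Thus for a vector $d\in D$ with $d\in H_\lambda(x)$ we have $\sigma(x,d)=xd^{tr}=\lambda N(x)$, and minimising $\sigma(x,d)$ over $d\in D$ is the same as minimising $\lambda$ over those $\lambda$ for which $H_\lambda(x)\cap D$ is nonempty. It therefore remains to check that the defining inequalities of $M_\lambda$ cut out exactly $H_\lambda(x)\cap D$, i.e. that for $d\in H_\lambda(x)\cap\Z^n$ the conditions "$y_2 d^{tr}\geq 0$" and "$(d+\lambda xA)A^{-1}(d+\lambda xA)^{tr}\leq \lambda^2 xAx^{tr}$" together are equivalent to $d\in\Wgeq$, hence to $d\in D$ (recall $D=\Z^n\cap\Wgeq\setminus\{0\}$, and $0\notin H_\lambda(x)$ since $\lambda>0$).

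First I would handle the membership $d\in\Wgeq$. By the description \eqref{eqcones2}, $\Wgt=\{z\in\R^n\mid zA^{-1}z^{tr}<0,\ y_2 z^{tr}>0\}$, so its closure $\Wgeq$ is $\{z\mid zA^{-1}z^{tr}\leq 0,\ y_2 z^{tr}\geq 0\}$ (one checks the closure is cut out by the non-strict inequalities, using that $\Wgt$ is one of the two components of the open set $zA^{-1}z^{tr}<0$ and $y_2$ separates them). The condition $y_2 d^{tr}\geq 0$ in the definition of $M_\lambda$ is precisely the second of these. For the first, write $d = -\lambda xA + y$ with $xy^{tr}=0$ and compute $dA^{-1}d^{tr}$: using $(-\lambda xA)A^{-1}(-\lambda xA)^{tr}=\lambda^2 xAx^{tr}$ and the cross term $2(-\lambda xA)A^{-1}y^{tr} = -2\lambda x y^{tr}=0$, we get $dA^{-1}d^{tr} = \lambda^2 xAx^{tr} + yA^{-1}y^{tr}$. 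Hence $dA^{-1}d^{tr}\leq 0$ is equivalent to $yA^{-1}y^{tr}\leq -\lambda^2 xAx^{tr} = \lambda^2 N(x)$; but $yA^{-1}y^{tr} = (d+\lambda xA)A^{-1}(d+\lambda xA)^{tr}$, so this is exactly the inequality defining $M_\lambda$ (note $\lambda^2 xAx^{tr} = -\lambda^2 N(x)$, so a sign-tracking check against the paper's stated inequality is needed here — it should read $\le \lambda^2 xAx^{tr}$ only if one side is correspondingly adjusted; I would verify the inequality directions carefully). This establishes $M_\lambda = \Z^n\cap\Wgeq\cap H_\lambda(x) = D\cap H_\lambda(x)$ for every $\lambda>0$.

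With that equivalence in hand, the lemma follows quickly. For $d\in D$ we have $\sigma(x,d)=xd^{tr}=\lambda_d N(x)$ where $\lambda_d>0$ is determined by $d\in H_{\lambda_d}(x)$; admissibility of $D$ (Proposition \ref{propDadm}) is not even needed here, only that $D$ is nonempty in the relevant range, which follows since $-xA\in D$ lies in $H_1(x)$. So $\mu_D(x)=\min_{d\in D}\sigma(x,d) = N(x)\cdot\min\{\lambda>0 : D\cap H_\lambda(x)\neq\emptyset\} = N(x)\cdot\min\{\lambda>0 : M_\lambda\neq\emptyset\}=:N(x)\cdot\lambda_0$. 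Then $d\in M_D(x)$ iff $d\in D$ and $\sigma(x,d)=\mu_D(x)$ iff $d\in D\cap H_{\lambda_0}(x)=M_{\lambda_0}$, i.e. $M_{\lambda_0}=M_D(x)$ where $\lambda_0$ is the minimal $\lambda$ making $M_\lambda$ nonempty. It remains only to note that $M_\lambda$ is finite: for fixed $\lambda$, the constraint $(d+\lambda xA)A^{-1}(d+\lambda xA)^{tr}\leq\lambda^2 xAx^{tr}$ confines $y=d+\lambda xA$ to a compact ellipsoid inside the hyperplane $\{xy^{tr}=0\}$ (the form $A^{-1}$ is positive definite on $x^\perp$, since $A^{-1}$ has signature $(n-1,-1)$ and the negative direction is spanned by $xA$, which is not in $x^\perp$ as $xAx^{tr}\neq 0$), so $M_\lambda$ is the intersection of a compact set with $\Z^n$.

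The main obstacle I expect is purely bookkeeping: pinning down the precise inequality (and its direction) in the definition of $M_\lambda$ against the closure description of $\Wgeq$, and confirming that $A^{-1}$ restricted to the hyperplane $x^\perp$ is genuinely positive definite so that $M_\lambda$ is finite and the "minimal $\lambda$" is attained rather than merely approached. Both reduce to Sylvester-type signature arguments on the rank-$(n-1)$ restriction, together with the elementary fact that a closed ellipsoidal shell contains only finitely many lattice points; neither is deep, but the sign conventions around $N(x)=-xAx^{tr}$ versus $xAx^{tr}$ must be tracked consistently throughout.
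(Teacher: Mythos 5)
Your proof is correct and follows essentially the same route as the paper's: the same decomposition $d=-\lambda xA+y$ with $xy^{tr}=0$, the same computation $dA^{-1}d^{tr}=\lambda^2xAx^{tr}+yA^{-1}y^{tr}$, and the same identification of minimizing $\lambda$ with minimizing $\sigma(x,d)=xd^{tr}=\lambda N(x)$. Your sign-tracking concern is justified --- the inequality in the statement should indeed read $\leq\lambda^2N(x)=-\lambda^2xAx^{tr}$, which is what the paper's own proof uses --- and the paper secures attainment of the minimal $\lambda$ by noting $xd^{tr}\in\Z$, hence $\lambda\in\tfrac{1}{N(x)}\Z$, a discreteness fact that your reparametrisation also delivers.
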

\begin{proof}
First of all we note that there actually is a minimal $\lambda$ such that $M_\lambda\neq\emptyset$. For that let $d=-\lambda xA+y\in H_\lambda(x)\cap\Z^n$. Since by assumption $x\in\Z^n$ and $A\in\Zhyp$ it holds that
\[xd^{tr}=-\lambda \underbrace{xAx^{tr}}_{\in\Z}+\underbrace{xy^{tr}}_{=0}=\lambda N(x)\in\Z,\]
thus it follows that for $M_\lambda$ to be nonempty it is necessary that $\lambda\in\tfrac{1}{N(x)}\Z$. 

We calculate 
\[dA^{-1}d^{tr}=-\lambda^2N(x)+yA^{-1}y^{tr},\]
thus minimizing $\lambda$ means minimizing $xd^{tr}$.

For $d$ to belong to $\Wgeq$ it is further necessary that
\[(d+\lambda xA)A^{-1}(d+\lambda xA)^{tr}\leq \lambda^2 N(x),\]
and thus we get that $M_\lambda\subseteq D$, and hence $M_\lambda=M_D(x)$ for the minimal $\lambda$ with $M_\lambda\neq\emptyset$.
\end{proof}
\begin{remark}\mbox{}\\
There are algorithms to calculate short vectors in positive definite lattices, see for instance \cite[pp. 188-190]{PoZ}. We can use them to calculate the set $M_\lambda$ for $\lambda=\tfrac{1}{N(x)},\tfrac{2}{N(x)},\dots$ until $M_\lambda$ is not empty in the following way: 

The matrix $A^{-1}$ induces a positive definite scalar product on the subspace $H_0(x)$ of $\R^n$. We can thus calculate all $y\in H_0(x)$ such that $d=-\lambda xA +y\in\Z^n$, which especially means that $y$ is contained in the lattice $\tfrac{1}{N(x)}\Z^n\cap H_0(x)$ endowed with the bilinear form induced by $A^{-1}$, and 
\[yA^{-1}y^{tr}\leq \lambda^2N(x).\]

Since $-xA\in H_1(x)$ is an integral point, we have to repeat this at most $N(x)$ times.
\end{remark}
\begin{lemma}\label{lemStab}\mbox{}
\begin{enumerate}[(i)]
\item Since $A$ is an integral matrix, all $D$-perfect vectors have integer entries (up to scaling). Thus we can restrict ourselves on integral vectors $x$.
\item Let $x$ an integral $D$-perfect point and $g\in S_x:=\Stab_\Omega(x)$. Then $g$ induces an automorphism of the positive definite lattice $L(x)=H_0(xA)\cap\Z^n$ with scalar product induced by $A$.
\end{enumerate}
\end{lemma}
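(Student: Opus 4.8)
The plan is to treat the two assertions separately; both proofs are short and direct. For part~(i), I would start from the definition of $D$-perfectness: if $x$ is $D$-perfect then $\dim\langle M_D(x)\rangle=n$, so one can choose linearly independent $d_1,\dots,d_n\in M_D(x)\subseteq D\subseteq\Z^n$. By the definition of $M_D(x)$, each of these satisfies $\sigma(x,d_i)=xd_i^{tr}=\mu_D(x)$. Writing $N\in\Z^{n\times n}$ for the (invertible) matrix whose rows are $d_1,\dots,d_n$, this says $xN^{tr}=\mu_D(x)\,(1,\dots,1)$, hence $x=\mu_D(x)\,(1,\dots,1)(N^{-1})^{tr}\in\mu_D(x)\,\Q^n$. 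Therefore a suitable positive rational multiple of $x$ lies in $\Z^n$ (and if $x\in P_D$, so that $\mu_D(x)=1$, then already $x\in\Q^n$). Since the lattice $L(x)$ and, more generally, every structure attached to $x$ in the sequel depends only on the ray $\R_{>0}x$, we may and will assume $x\in\Z^n$.

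For part~(ii), fix an integral $D$-perfect point $x$ and $g\in S_x=\Stab_\Omega(x)$, so that $g\in\GL_n(\Z)$, $gAg^{tr}=A$ and $xg=x$ (equivalently $xg^{-1}=x$). First I would check that $L(x)=H_0(xA)\cap\Z^n$ is a genuine lattice of rank $n-1$ carrying a positive definite form: since $x\in\Z^n$ we have $xA\in\Z^n$, so $H_0(xA)$ is the kernel of an integral linear form and hence a rational subspace; moreover $\Phi(x,x)=xAx^{tr}<0$ because $x\in\Vgt$, so $\R x$ is $\Phi$-nondegenerate and $\R^n=\R x\oplus H_0(xA)$. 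As $\Phi$ has signature $(n-1,-1)$ while $\Phi|_{\R x}$ has signature $(0,-1)$, this orthogonal splitting forces $\Phi|_{H_0(xA)}$ to have signature $(n-1,0)$, i.e. to be positive definite; a fortiori $\Phi|_{L(x)}$ is positive definite.

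It remains to see that $g$ induces an automorphism of $(L(x),\Phi|_{L(x)})$. The one computation that matters is the identity $Ag^{tr}=g^{-1}A$, read off from $gAg^{tr}=A$. For $y\in H_0(xA)$ one then has $(xA)(yg)^{tr}=xAg^{tr}y^{tr}=xg^{-1}Ay^{tr}=xAy^{tr}=0$, using $xg^{-1}=x$; together with $yg\in\Z^n$ this shows $g$ maps $L(x)$ into itself, and the same argument applied to $g^{-1}\in S_x$ shows $g|_{L(x)}$ is bijective. Finally, for $u,v\in L(x)$ we get $\Phi(ug,vg)=(ug)A(vg)^{tr}=ugAg^{tr}v^{tr}=uAv^{tr}=\Phi(u,v)$, so $g|_{L(x)}\in\Aut(L(x))$; and since any $g$ acting trivially on $L(x)$ and fixing $x$ is the identity on $\R x\oplus H_0(xA)=\R^n$, the map $S_x\to\Aut(L(x))$, $g\mapsto g|_{L(x)}$, is in fact injective, which is what is needed for the finiteness of $S_x$ invoked earlier.

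I do not expect a genuine obstacle. The only two points deserving attention are, in part~(i), extracting rationality of $x$ from $D$-perfectness via the integral linear system $xN^{tr}=\mu_D(x)(1,\dots,1)$, and, in part~(ii), observing that $gAg^{tr}=A$ is exactly the relation needed both to pull $g$ through $A$ in the orthogonality condition $xAy^{tr}=0$ and to make $g|_{L(x)}$ an isometry; everything else is bookkeeping with the signature of $\Phi$.
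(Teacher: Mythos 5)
Your proposal is correct and follows the same route as the paper: the paper dismisses (i) as clear (your rationality argument via the invertible integral system $xN^{tr}=\mu_D(x)(1,\dots,1)$ is exactly the missing detail), and for (ii) it likewise deduces positive definiteness of $\Phi$ on $H_0(xA)$ from $xAx^{tr}<0$ and the signature, leaving the verification that $g$ preserves $L(x)$ and the form as an easy calculation, which you carry out via $Ag^{tr}=g^{-1}A$ and $xg^{-1}=x$. Your added remark that $S_x\to\Aut(L(x))$ is injective is a useful supplement, since that is what the finiteness claim in the properly-discontinuous-action lemma actually relies on.
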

\begin{proof}
$(i)$ is clear. 

Regarding $(ii)$, we have that $x\in\Vgt$ and thus $xAx^{tr}<0$. In addition it is perpendicular to $L(x)$ with respect to the bilinear form induced by $A$, hence this bilinear form must be positive definite on the orthogonal complement $H_0(xA)$ of $x$. That $g$ induces an automorphism of $L(x)$ is easily calculated.
\end{proof}
\begin{remark}\label{remStab}\mbox{}
\begin{enumerate}[(i)]
\item There are quite powerful methods to calculate the automorphism group of a positive definite lattice, see for instance \cite{PlS}. Thus we can calculate the automorphisms of $L(x)$ and continue them such that they act trivially on $x$. This can be done by adding $x$ to a lattice basis of $L(x)$ to obtain a basis of $\R^n$. With respect to this basis, the automorphisms have the form
\[\left(\begin{array}{c|c} g & 0 \\ \hline 0 & 1\end{array}\right),\quad g\in \Aut(L(x)).\]
Changing the basis to $B$ chosen in the very beginning yields a subgroup $G_x$ of rational automorphisms of $A$. The elements in $G_x$ that fix $\Z^n$ as a point set are exactly the elements in $S_x$.
\item A slight modification of this also gives a method to determine the connecting elements in \Cref{theoBassSerre}. For $D$-perfect vectors $x,y\in\Vgt$ with $N(x)=N(y)$ we calculate integral isometries of the lattices $L(x)\rightarrow L(y)$. If there are no isometries, then $x$ and $y$ are inequivalent under $\Omega$. If there are, then continuing them onto all of $\R^n$ such that $x\mapsto y$ yields again some rational automorphisms of $A$, of which it is to determine, whether they are integral or not. If there is an integral one, this yields such a connecting element denoted by $\omega_y$ in \Cref{theoBassSerre}. If not, then again, $x$ and $y$ are not equivalent under $\Omega$.

Note that this last case does in fact occur.
\end{enumerate}
\end{remark}
\subsection{Finiteness of the Residue Class Graph}
Up until now, we have given explicit methods to solve all the computational tasks in the algorithm in \cref{subprelim}. But it is not at all clear that \Cref{alg} terminates for any integral hyperbolic lattice, i.e. for the reduced automorphism group of any such lattice there is only a finite number of inequivalent perfect points. In this subsection we will prove this using similar methods as in \cite{Siegel}, where C.L. Siegel has proven that the automorphism group of any definite or indefinite lattice is finitely generated.

We recall some of Siegel's results. For that let $S\in\Z^{n\times n}$ be an integral symmetric matrix of signature $(m, -(n-m))$ and $\Omega$ its reduced automorphism group. Consider the equation
\begin{align}\label{HSH}
HS^{-1}H=S
\end{align}
for positive definite matrices $H\in\Rsgt$. The solutions of equation \eqref{HSH} form a rational manifold $\mathfrak{H}$ of dimension $m(n-m)$ which can be parametrized by
\begin{align}\label{Hmanifold}
H=2Z-S,\quad Z=\left(\begin{array}{c|c} T^{-1} & T^{-1}Y^{tr} \\  \hline YT^{-1} & YT^{-1}Y^{tr} \end{array}\right),\quad T=(I_m\mid Y^{tr})S^{-1}\left(\begin{array}{c} I_m \\ \hline Y\end{array}\right),
\end{align}
where where $Y\in\R^{(n-m)\times m}$ is chosen such that $T$ is positive definite (see \cite[Equation (35)]{Siegel}). Note that $\Omega$ acts from the left on $\mathfrak{H}$ by 
\[(g,H)\mapsto gHg^{tr}\]
and from the right on the corresponding set of $Y$ by
\[(Y,g)\mapsto g^{tr}\left(\begin{array}{c} I_m \\ \hline Y\end{array}\right)\]
where we have to renormalize the last term (by multiplication with a matrix $Q\in\GL_m(\R)$) such that the upper part becomes again $I_m$. Thus we get a rational injection
\begin{align}\label{eqphi}
\phi:\mathfrak{H}\rightarrow \R^{(n-m)\times m},\, H\mapsto Y
\end{align}
which is $\Omega$-equivariant. 

Using reduction theory of quadratic forms due to H. Minkowski and results by C. Hermite, Siegel constructs a fundamental domain $\FF$ of $\Omega$ in $\FH$.

\begin{theorem}\label{theoSiegel}(Siegel, 1940)\\
Let $S\in\Z^{n\times n}$ be an integral symmetric matrix of signature $(m,-(n-m))$.
There is a finite number of hyperplanes in the subspace of $\Rsym$ spanned by $\FH$ such that $\FF$ is an intersection of the corresponding halfspaces and the manifold $\FH$ of all matrices $H$ fulfilling equation \eqref{HSH}. In addition, $\FF$ has only finitely many neighbours, that means there are only finitely many elements $\w\in\Omega$ with $\FF\cap\FF\w\neq\emptyset$.
\end{theorem}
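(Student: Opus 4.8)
The plan is to reconstruct Siegel's reduction-theoretic argument, building $\FF$ as an explicit ``reduced'' region inside $\FH$ and reading off both finiteness assertions from Minkowski's classical theory together with Hermite-type estimates.

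First I would make precise the geometry of the ambient space. The set $\FH$ of positive definite $H\in\Rsgt$ with $HS^{-1}H=S$ is a real-analytic model of the symmetric space $\mathrm{O}(m,-(n-m))(\R)/(\text{maximal compact})$: the map $H\mapsto S^{-1}H$ identifies $\FH$ with the set of involutions in $\mathrm{O}(S)(\R)$ conjugate to the standard one, and $\Omega$, a discrete subgroup of $\mathrm{O}(S)(\R)$, acts by $H\mapsto gHg^{tr}$. Since discrete subgroups of a real Lie group act properly discontinuously on its associated symmetric space, $\Omega$ acts properly discontinuously on $\FH$; in particular every $\Omega$-orbit in $\FH$ is discrete and closed, hence discrete in $\Rsgt$ as well. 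So a fundamental domain exists a priori; the content of the theorem is its polyhedral shape.

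Next comes the reduction step. For $H\in\FH$, regard $H$ simultaneously as a positive definite quadratic form on $\Z^n$, and call $H$ \emph{$\Omega$-reduced} if $H$ is Minkowski-smallest within its $\Omega$-orbit, i.e. if $e_iHe_i^{tr}\le (e_ig)H(e_ig)^{tr}$ for every $g\in\Omega$ for which $e_1g,\dots,e_{i-1}g,e_ig$ extend to a $\Z$-basis, together with the usual sign normalizations. Each such condition is a homogeneous \emph{linear} inequality in the entries of $H$, so it cuts $\FH$ with an affine halfspace of $\Rsym$ (restricted to the linear span of $\FH$); let $\FF$ be the resulting region. Then I must check: (i) every $H\in\FH$ is $\Omega$-equivalent to a reduced one, which follows because the $\Omega$-orbit of $H$ is discrete in $\Rsgt$, so the relevant minima are attained; (ii) only finitely many of the defining inequalities are non-redundant on $\FH$. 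For (ii) one argues, using the rational parametrization \eqref{Hmanifold} and the $\Omega$-equivariant injection $\phi$ of \eqref{eqphi}, that an $\Omega$-reduced $H$ automatically satisfies all but finitely many of the inequalities: by Hermite's bounds on reduced forms, any $g\in\Omega$ capable of sharpening a given inequality has entries bounded in terms of $S$ and of the reduced datum, so only finitely many $g$ are relevant; Minkowski's finiteness theorem for the reduction of positive definite forms provides the corresponding statement for the full $\GL_n(\Z)$-domain, and our collection is a subfamily of it. This yields $\FF=\FH\cap\bigcap_{i=1}^N\{H:\langle H,N_i\rangle\ge 0\}$ with $N$ finite, which is the first assertion.

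Finally, finiteness of neighbours. If $\FF\cap g\FF\ne\emptyset$ for some $g\in\Omega$, choose $H$ in the intersection; then $H$ and $g^{-1}H g^{-tr}$ are both $\Omega$-reduced, so $g^{-1}$ lies among the finitely many elements produced by the entry bounds of the previous step, whence only finitely many such $g$ occur. (Equivalently one may invoke local finiteness of the tessellation $\{g\FF\}_{g\in\Omega}$ of $\FH$ together with the fact, just established, that $\FF$ has only finitely many faces.) The main obstacle is the reduction step itself, specifically part (ii): converting the a priori infinite family of inequalities indexed by all of $\Omega$ into a finite explicit subfamily while keeping the defining functions linear in $H$. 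This is the technical heart of \cite{Siegel}, resting on Minkowski's finiteness theorem and Hermite-type estimates transported to $\FH$ via the rational parametrization, and I would follow Siegel's treatment there rather than reprove it from scratch.
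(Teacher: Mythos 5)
The paper's own ``proof'' of this theorem is nothing more than the citation \cite[Satz 10]{Siegel}, and your proposal ultimately does the same thing: you sketch the shape of Siegel's reduction-theoretic argument and then explicitly defer its technical heart to \cite{Siegel}. In that sense there is no gap relative to the paper, and your outline has the right ingredients (the identification of $\FH$ with the symmetric space of the real orthogonal group of $S$, a polyhedral reduction domain cut out by Minkowski-type linear conditions, neighbour-finiteness deduced from the finiteness of the defining inequalities). Two caveats on the heuristic you give for the key step (ii), in case you ever expand it: the observation that your family of inequalities ``is a subfamily'' of the Minkowski conditions for $\GL_n(\Z)$ does not transfer finiteness of the non-redundant ones, because redundancy is not inherited by subfamilies --- an inequality implied by others in the full family need not be implied by those that happen to come from elements of $\Omega$; and the Hermite-type bound on the relevant $g\in\Omega$ must be \emph{uniform} over the generally non-compact set of reduced $H$, not merely ``in terms of the reduced datum,'' which is exactly the uniformity that Siegel's use of the parametrization \eqref{Hmanifold} is designed to deliver. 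Since you fall back on \cite{Siegel} precisely for this point, the proposal stands or falls with the citation, just as the paper's does.
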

\begin{proof}
\cite[Satz 10]{Siegel}
\end{proof}
Equipped with these results we can now prove the main result of this subsection.
\begin{theorem}
Let $A\in\Zhyp$ be an integral hyperbolic matrix, $\Omega$ its reduced automorphism group and $\Vigt$ and $D$ as in equations \eqref{eqcones1}-\eqref{eqD}. Then the residue class graph $\Gamma_D/\Omega$ is finite. In particular, \Cref{alg} terminates.
\end{theorem}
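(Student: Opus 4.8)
The plan is to verify condition $(2)$ of \Cref{theofundfin} by transporting Siegel's fundamental domain from \Cref{theoSiegel} into the cone $\Wgt$, which by \Cref{theofundfin} then gives the finiteness of the set of $\Omega$-orbits of $D$-perfect points of $D$-minimum $1$. The first step is to make the dictionary between the two pictures explicit. For $m=n-1$ the manifold $\FH=\{H\in\Rsgt\mid HA^{-1}H=A\}$ of positive definite majorants of $A$ is, just like $\Wgt$ modulo positive scaling, a model of hyperbolic $(n-1)$-space: to a vector $z\in\Wgt$ with $zA^{-1}z^{tr}=-1$ one associates the majorant $H_z\in\FH$ obtained from the $A^{-1}$-orthogonal splitting $\R^n=\R z\perp z^{\perp}$ by reversing the sign of $A^{-1}$ along $\R z$. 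One checks that $z\mapsto H_z$ is a bijection onto $\FH$ and is equivariant for the natural actions of $\Omega$ (on $\Wgt$ through the adjoint $g\mapsto g^{tr}$, and on $\FH$ by $H\mapsto gHg^{tr}$). Composing with the ray map, Siegel's fundamental domain $\FF\subset\FH$ pulls back to a fundamental domain $\tilde\FF\subset\Wgt$ for the action of $\Omega^{ad}$ on $\Wgt$.

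By \Cref{theoSiegel}, $\FF$ is the intersection of $\FH$ with finitely many half-spaces, so $\tilde\FF$ is the intersection of $\Wgeq$ with finitely many half-spaces through the origin; in particular $\tilde\FF$ is a polyhedral cone and hence, by the Minkowski--Weyl theorem, finitely generated, say $\tilde\FF=\{\sum_i\alpha_ir_i\mid\alpha_i\geq 0\}$ with $r_1,\dots,r_s\in\Wgeq$ its extreme rays. The rays lying in the open cone $\Wgt$ are harmless; the delicate point is the finitely many rays on the light cone $\partial\Wgt=\{xA^{-1}x^{tr}=0\}$. Each such $r_i$ is an intersection of $n-1$ of the bounding hyperplanes of $\tilde\FF$ with $\partial\Wgt$; since Siegel's bounding hyperplanes come from Minkowski's (integral, hence rational) reduction inequalities, $r_i$ is a rational line lying on the rational quadric $\partial\Wgt$, i.e.\ its direction is a rational isotropic vector. (Conceptually this is the fact that the cusps of the arithmetic group $\Aut_\Z(A)$ correspond to $\Omega$-orbits of primitive isotropic vectors of $(\Z^n,A)$.) Consequently, taking for $M$ primitive integral points on these finitely many boundary rays together with finitely many integral points of $\Wgt\cap\Z^n$ covering the remaining, compact-modulo-scaling part of $\tilde\FF$, we obtain a finite set $M\subset D$ with $\tilde\FF\subseteq P(M)$ such that at most finitely many rays of the pyramid $P(M)$ lie on $\partial\Wgt$.

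It remains to extract condition $(2)$ of \Cref{theofundfin}. Since every $y\in\Wgt$ is $\Omega^{ad}$-equivalent to a point of $\tilde\FF\subseteq P(M)$, it suffices to produce one $y_0\in\Wgt$ with $d-y_0\in\Wgeq$ for all $d\in P(M)\cap\Z^n$. Enlarging $M$ by finitely many integral points and triangulating, we may split $P(M)$ into finitely many pyramids generated by exactly $n=\dim\V$ integral vectors; for each of these the argument from the sketch of the proof of \Cref{theofundfin} produces, for every integral point $\ell$ of the pyramid, a vector $y_\ell\in\Wgt$ with $\ell-y_\ell\in\Wgeq$, and only finitely many such $y_\ell$ occur in total. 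Choosing $y_0$ to be a sufficiently small positive combination of all of them forces $y_\ell-y_0\in\Wgt$ for each $\ell$, whence $d-y_0=(d-y_\ell)+(y_\ell-y_0)\in\Wgeq$; this is exactly condition $(2)$ (equivalently, the situation described in the remark following \Cref{theofundfin}). Hence $P_D/\Omega$ is finite, and since $\Gamma_D$ is locally finite by \cite[Theorem 1.9]{Opg01} the quotient $\Gamma_D/\Omega$ is a finite graph, so \Cref{alg} terminates. The main obstacle is the second step, namely controlling the rays of $\tilde\FF$ on the light cone: showing that these finitely many ideal rays are rational is where the arithmeticity of $\Aut_\Z(A)$ (equivalently, the rationality of Siegel's reduction domain) is indispensable, while the passage between the cone and majorant pictures in the first step and the shifting argument in the last step are routine.
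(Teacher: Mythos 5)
Your argument is essentially the paper's own proof: both transport Siegel's fundamental domain (\Cref{theoSiegel}) into the cone via the $\Omega$-equivariant identification of the majorant manifold $\FH$ with $\Wgt$ modulo positive scaling, obtain a fundamental domain $\tilde\FF\subset\Wgt$ cut out by finitely many hyperplanes, and then conclude with \Cref{theofundfin}. The only differences are ones of detail rather than of route: the paper sets up the identification by replacing $A$ with $-A$ and using the explicit parametrization \eqref{eqphi} with $m=1$, and it leaves implicit the passage from ``bounded by finitely many hyperplanes'' to condition $(2)$ of \Cref{theofundfin}, which you spell out via the rationality of the boundary rays on the light cone, the embedding of $\tilde\FF$ into a pyramide $P(M)$ with $M\subset D$, and the shifting argument already sketched in the proof of \Cref{theofundfin}.
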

\begin{proof}
Obviously, we can replace $A$ by $-A$ without changing $\Omega$. Especially it holds that $x(-A)^{-1}x^{tr}>0$ for every $x\in\Wgt$. The signature of $-A$ is $(1,1-n)$, thus the manifold $\FH$ as described in equation \eqref{Hmanifold} is homeomorphic via $\phi$ in equation \eqref{eqphi} to the manifold $\Wgt/\R_{>0}$ as we can rescale each cone vector such that its first coordinate is $1$. Since $\phi$ is $\Omega$-equivariant, the fundamental domain $\FF$ is mapped to a fundamental domain $\tilde\FF\subset\Wgt$ of the action of $\Omega$. \Cref{theoSiegel} tells us that $\FF$ and therefore $\tilde\FF$ are bounded by finitely many hyperplanes in $\R^n$. But then \Cref{theofundfin} implies that there are only finitely many $D$-perfect points modulo $\Omega$, hence the assertion follows.
\end{proof}
\begin{remark}
Since \Cref{theoSiegel} is not restricted to a certain signature of the matrix $S$, it is very likely that the method described here will yield a more general procedure to determine the automorphism group of any indefinite lattice. In that case one would have to define the dual cones as subsets of $\R^{n\times m}$ in an analogous way, replacing the symbols $<$ and $>$ by "negative" and "positive definite" respectively.  
\end{remark}
\subsection{The Watson Process}\label{subWatson}
In this subsection we are going to recall the Watson process which enables us to reduce the discriminant of a quadratic form without increasing its class number, see e.g. \cite{Watson} and \cite{Lorch} for details. For that purpose, consider for a moment $(L,\Phi)$ to be an integral lattice with a symmetric, non-degenerate bilinear form $\Phi$ of arbitrary signature.
\begin{definition}\mbox{}\\
Let $p$ a prime number such that $\Delta(L)$ contains an element of order $p^2$. The integral lattice
\[Fill_p(L):=(pL^\#\cap L, \frac{1}{p}\Phi)\]
is called a $p$-\emph{filling} of $L$.
\end{definition}
The $p$-filling has the following effect on the genus symbol of $L$ in the sense of \cite[Chapter 15]{CS}: For the $p$-filling to be defined, the highest level of a $p$-adic Jordan constituent of $L$ must be at least $2$. The $p$-filling reduces this level by $2$. Repeating this until it is no longer possible yields the so called \emph{Watson lattice} of $L$, denoted by $\Watson(L)$. The mapping $L\mapsto \Watson(L)$ is called the \emph{Watson process}.
\begin{remark}\mbox{}\\
For an integral lattice $L$, the discriminant group $\Delta(\Watson(L))$ has squarefree exponent by construction and, which is even more important for computational issues, the discriminant $\vert\Delta(L)\vert$ is decreased by the Watson process by a factor $p^2$ for every $p$-filling during the Watson process.
\end{remark}
\begin{remark}\mbox{}\\
Since $p$-filling (and therefore the Watson process) cannot be expressed as an action of $\GL_n(\Z)$, the automorphism groups of $L$ and $Fill_p(L)$ will not be isomorphic. But up to isomorphism one always has
\[\Aut(L)\leq\Aut(Fill_p(L))\quad\text{and}\quad [\Aut(Fill_p(L)):\Aut(L)]<\infty.\]
Hence one can determine $\Aut(L)$ again via an orbit-stabilizer computation in $\Aut(Fill_p(L))$ or $\Aut(\Watson(L))$ respectively.
\end{remark}
Although we cannot prove it, so far all examples suggest that first calculating $\Aut_\Z(\Watson(A))$\footnote{We use the same notation for the Gram matrices of the lattices involved} via the algorithm and then finding $\Aut_\Z(A)$ as a co-finite subgroup in it via an orbit-stabilizer calculation makes the computation very much faster, provided that the Watson process has any effect.

Heuristically, this could be explained by two observations: Since the automorphism group of $\Watson(L)$ properly contains the one of $L$ up to isomorphism, the corresponding equivalence relation on the set of $D$-perfect points is in a way coarser for $\Watson(L)$ than for $L$. Thus one can expect that modulo $\Aut(\Watson(L))$ there will be fewer classes of $D$-perfect points than modulo $\Aut(L)$. 

Furthermore, the computation time to determine $D$-minimal vectors of $x\in\Vgt\cap\Z^n$ depends among others on the discriminant of the lattice $(\tfrac{1}{N(x)}\Z^n\cap H_0(x),A^{-1})$ and the number $N(x)$ (cf. \Cref{subRun}). Both numbers tend to be smaller for $\Watson(L)$ than for $L$. Since in most cases one has to calculate $D$-minimal vectors for many different vectors $x$, an acceleration of the computation of $D$-minimal vectors is quite valuable.

The strength of this slight modification is illustrated in \Cref{exWatson} and \Cref{subStat}.
\section{Performance of the algorithm}\label{secPerformance}
\subsection{Running time}\label{subRun}
It seems rather hard to give an a priori estimate of the running time of our algorithm, but as an a posteriori estimate we can give the following: Suppose $A\in\Zhyp$ has $d$ $D$-perfect points $x_1,\dots ,x_d$ where $x_i$ has $M_i$ $D$-minimal vectors and $r_i$ inequivalent directions. Since by experiments we see that our algorithm is only practicable in low dimensions we assume that computing short vectors and isometries of positive definite lattices as well as performing the Watson process takes constant time and that arithmetical operations like matrix-vector multiplication or calculating the content of an integral matrix don't consume any time at all. The crucial steps in the algorithm are thus the computation of
\begin{enumerate}
\item $D$-minimal vectors and $D$-short vectors ($M_D^C(x):=\{d\in D\,\mid \, (x,d)\leq C\}$ for given $C>0$) of a point $x$,
\item neighbours of a point,
\item directions of a point.
\end{enumerate}
In the author's \textsc{Magma} implementation of the algorithm, the calculation of $M_D(x)$ (resp. $M_D^C(x)$) mainly depends on the value $N(x)=-xAx^{tr}$ (resp. on $C$ and $N(x)$) since one basically computes short vectors in the positive definite orthogonal complement of $x$ this many times (in the worst case).  

For the algorithm to compute neighbours of a point, one does not have such an estimate because one can only say that this procedure finds a neighbour in a given direction in a finite amount of time but not necessarily in a bounded time. The reason for this is that in the implementation a sort of bisection method is used. Usually, this doesn't have to be repeated very often, but in each repetition, the $D$-minimal vectors of a point with increasingly bigger norm $-yAy^{tr}$ have to be computed.

The calculation for finding the directions of a point is carried out using a built-in algorithm of \textsc{Magma}, the running time of which depends exponentially on the dimension $n$ and polynomially on the number of $D$-minimal vectors of the point (cf. \cite{Chazelle}).

Roughly, one has to compute $\sum r_i$ neighbours of points, at least $d$ times the directions of (perfect) points, and $D$-minimal vectors at least $d+2\sum r_i$ times. The higher the dimension gets, the less probable it becomes that the first point one picks in the cone $\V_1$ is (close to) perfect, thus one has to repeat these computations even more often.  

Besides the running time, memory can quickly become a major issue. Most calculated examples needed several Gigabytes of memory.
\subsection{Examples}\label{secEx}
All computations were executed on a \texttt{Quad-Core AMD Opteron(tm) Processor 8356} running at 1150 GHz. The used \textsc{Magma} version is V2.19-2.

Throughout this section we consider the lattice $(\Z^n,\Phi)$ where the Gram matrix of $\Phi$ with respect to the standard basis is given by $A$.
\begin{example}\label{exComplete}\mbox{}\\
Our first example we discuss the steps of the algorithm in detail. We take here
\[A=\begin{pmatrix}
-1 & -3 & -1 \\
-3 & 14 & 8 \\
-1 & 8 & 11
\end{pmatrix},\quad A^{-1}=\frac{1}{155}\begin{pmatrix}
-90 & -25 & 10 \\
-25 & 12 & -11 \\
10 & -11 & 23
\end{pmatrix}.
\]
For this matrix the algorithm finds $9$ inequivalent $D$-perfect points. Rescaled such that their entries become integral, they are represented by
\begin{align*}
x_1=&  \begin{pmatrix}
1 & 0 & 0
\end{pmatrix}
   & x_2= & \begin{pmatrix}
2 & 1 & -1
\end{pmatrix}
  & x_3= &  \begin{pmatrix}
2 & 1 & 0
\end{pmatrix}
 \\
x_4=&  \begin{pmatrix}
9 & 0 & -2
\end{pmatrix}
  & x_5= & \begin{pmatrix}
5 & 3 & -3
\end{pmatrix}
  & x_6= &  \begin{pmatrix}
12 & 5 & -7
\end{pmatrix}
 \\
x_7=&  \begin{pmatrix}
3 & 2 & -1
\end{pmatrix}
  & x_8= &  \begin{pmatrix}
14 & 9 & -2
\end{pmatrix}
 & x_9= &  \begin{pmatrix}
21 & 8 & -12
\end{pmatrix}
\end{align*}
where these points each have got (in the listed order) $8,\,
4,\,
6,\,
8,\,
4,\,
3,\,
4,\,
3$ and $6$ neighbours.
The stabilizers of $x_2$, $x_5$, $x_6$, $x_7$, $x_8$ are trivial and all the other stabilizers are cyclic of order $2$ given by
\begin{align*}
\Stab(x_1)=& \langle \begin{pmatrix}
1 & 0 & 0 \\
6 & -1 & 0 \\
2 & 0 & -1
\end{pmatrix}
\rangle & \Stab(x_3)=&\langle \begin{pmatrix}
9 & 5 & 0 \\
-16 & -9 & 0 \\
-12 & -6 & -1
\end{pmatrix}
\rangle \\
\Stab(x_4)=& \langle \begin{pmatrix}
125 & 0 & -28 \\
774 & -1 & -172 \\
558 & 0 & -125
\end{pmatrix}
\rangle & \Stab(x_9)=& \langle \begin{pmatrix}
1385 & 528 & -792 \\
1974 & 751 & -1128 \\
3738 & 1424 & -2137
\end{pmatrix}
\rangle
\end{align*}
In addition, we find $16$ connecting elements
{\allowdisplaybreaks
\begin{align*}
c_{1,1}=& \begin{pmatrix}
31 & -5 & 10 \\
96 & -15 & 32 \\
-48 & 8 & -15
\end{pmatrix}
& c_{4,1}=& \begin{pmatrix}
125 & 0 & -28 \\
-24 & 1 & 4 \\
-308 & 0 & 69
\end{pmatrix}\\
c_{4,4}=& \begin{pmatrix}
561 & -28 & -84 \\
3920 & -195 & -588 \\
2440 & -122 & -365
\end{pmatrix}
& c_{4,4}'=& \begin{pmatrix}
145 & 9 & -44 \\
966 & 59 & -292 \\
642 & 40 & -195
\end{pmatrix}
\\
c_{4,8}=&\begin{pmatrix}
1005 & 5 & -232 \\
-1784 & -9 & 412 \\
-1048 & -6 & 243
\end{pmatrix}
& c_{5,5}=& \begin{pmatrix}
289 & 180 & -180 \\
-96 & -59 & 60 \\
368 & 230 & -229
\end{pmatrix}
\\
c_{5,5}'=&\begin{pmatrix}
51 & 35 & -30 \\
-40 & -27 & 24 \\
40 & 28 & -23
\end{pmatrix}
& c_{5,8}=&\begin{pmatrix}
109 & 57 & -68 \\
-186 & -97 & 116 \\
-82 & -42 & 51
\end{pmatrix}
\\
c_{6,6}=&\begin{pmatrix}
139 & 63 & -84 \\
120 & 53 & -72 \\
320 & 144 & -193
\end{pmatrix}
& c_{7,9}=&\begin{pmatrix}
591 & 432 & -224 \\
880 & 643 & -332 \\
1620 & 1184 & -613
\end{pmatrix}
\\
c_{7,7}=&\begin{pmatrix}
45 & 33 & -22 \\
-56 & -41 & 28 \\
8 & 6 & -3
\end{pmatrix}
& c_{8,5}=&\begin{pmatrix}
75 & 51 & -16 \\
26 & 17 & -4 \\
142 & 96 & -29
\end{pmatrix}
\\
c_{8,4}=&\begin{pmatrix}
9 & 5 & 0 \\
70 & 39 & 0 \\
30 & 16 & 1
\end{pmatrix}
& c_{9,7}=&\begin{pmatrix}
231 & 96 & -136 \\
-374 & -155 & 220 \\
-118 & -48 & 69
\end{pmatrix}
\\
c_{9,9}=&\begin{pmatrix}
1575 & 603 & -902 \\
2506 & 961 & -1436 \\
4422 & 1694 & -2533
\end{pmatrix}
& c_{9,6}=& \begin{pmatrix}
1385 & 528 & -792 \\
1974 & 751 & -1128 \\
3738 & 1424 & -2137
\end{pmatrix}.
\end{align*}
} 
The notation shall be understood that via the connecting element $c_{i,j}$ the $D$-perfect points $x_i$ and $x_j$ are adjacent in the residue class graph, in other words $x_i$ and $x_jc_{i,j}^{-1}$ are contiguous. The precise contiguity relations between the points are represented in the residue class graph $\Gamma_D/\Omega$ in \Cref{figResGraph}. The straight lines represent direct contiguity, the curved ones contiguity by the connecting elements which are labeled on the edges.
\begin{center}
\begin{figure}[h!]
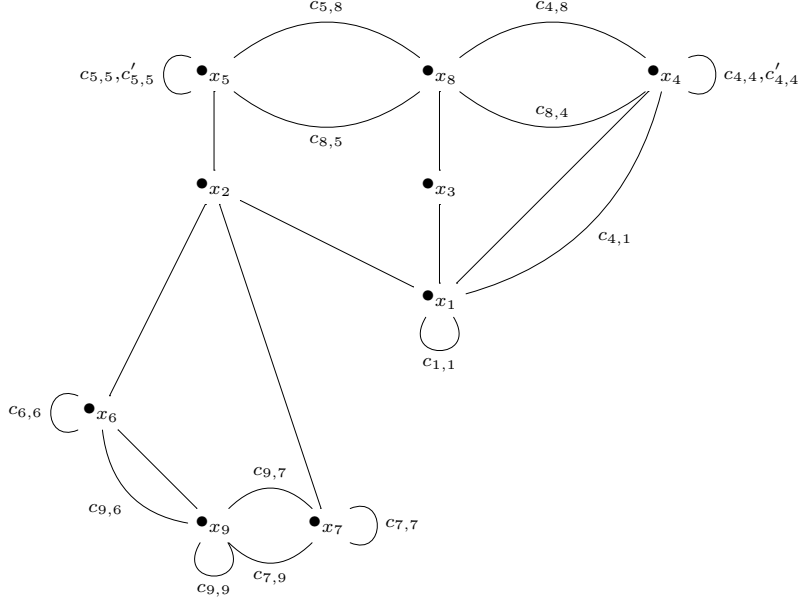

\xygraph{
                         !{<0cm,0cm>;<1.5cm,0cm>:<0cm,1.5cm>::}
                         !{(0,0)}*+{\bullet_{x_5}}="x_5"
                         !{(2,0)}*+{\bullet_{x_8}}="x_8"
                         !{(4,0)}*+{\bullet_{x_4}}="x_4"
                         !{(0,-1)}*+{\bullet_{x_2}}="x_2"
                         !{(2,-1)}*+{\bullet_{x_3}}="x_3"
                         !{(2,-2)}*+{\bullet_{x_1}}="x_1"
                         !{(-1,-3)}*+{\bullet_{x_6}}="x_6"
                         !{(1,-4)}*+{\bullet_{x_7}}="x_7"
                         !{(0,-4)}*+{\bullet_{x_9}}="x_9"
                         "x_5"-"x_2"
                         "x_8"-"x_3"
                         "x_4"-"x_1"
                         "x_2"-"x_1"
                         "x_2"-"x_6"
                         "x_2"-"x_7"
                         "x_3"-"x_1"
                         "x_6"-"x_9"
                         "x_5" -@(ul,dl) "x_5" _{c_{5,5},c_{5,5}'}
                         "x_4" -@(ur,dr) "x_4" ^{c_{4,4},c_{4,4}'}
                         "x_5" -@/^0.7cm/ "x_8" ^{c_{5,8}}
                         "x_5" -@/_0.7cm/ "x_8" _{c_{8,5}}
                         "x_8" -@/^0.7cm/ "x_4" ^{c_{4,8}}
                         "x_8" -@/_0.7cm/ "x_4" ^{c_{8,4}}
                         "x_4" -@/^0.7cm/ "x_1" ^{c_{4,1}}
                         "x_1" -@(dl,dr) "x_1" _{c_{1,1}}
                         "x_7" -@/^0.5cm/ "x_9" ^{c_{7,9}}
                         "x_7" -@/_0.5cm/ "x_9" _{c_{9,7}}
                         "x_6" -@(ul,dl) "x_6" _{c_{6,6}}
                         "x_9" -@(dl,dr) "x_9" _{c_{9,9}}
                         "x_7" -@(ur,dr) "x_7" ^{c_{7,7}}
                         "x_6" -@/_0.5cm/ "x_9" _{c_{9,6}}
                         }                        
                         \label{figResGraph}
                         \caption{Residue class graph $\Gamma_D/\Omega$} 
\end{figure}
\end{center}
Calculating this information takes about $6$ seconds.
\end{example}
In \Cref{exComplete} it does not matter whether one applies the Watson process since it does not have any effect on the matrix and runs in virtually no time. The following example shall illustrate the situation when the Watson process does change things.
\begin{example}\label{exWatson}\mbox{}\\
In this example we want to demonstrate how effective the usage of the Watson process can be. For that purpose, we look at the matrix
\[A=\begin{pmatrix}
17 & -17 & 20 & -9 \\
-17 & -25 & 15 & -6 \\
20 & 15 & 4 & -2 \\
-9 & -6 & -2 & 1
\end{pmatrix}
,\quad 
A^{-1}=\frac{1}{32}\begin{pmatrix}
9 & -6 & 91 & 227 \\
-6 & 4 & -50 & -130 \\
91 & -50 & 1137 & 2793 \\
227 & -130 & 2793 & 6881
\end{pmatrix}
.
\]
We have $\det A=-2^5$ and the genus symbol (in the notation of \cite{CS}, neglecting the additional parameters of the $2$-adic genus symbol) of $A$ is given by
\[(1^3\cdot 32),\]
thus the Watson process does have an effect and yields
\[\Watson(A)=\begin{pmatrix}
-8 & -1 & -2 & -19 \\
-1 & 10 & -15 & -4 \\
-2 & -15 & 21 & -2 \\
-19 & -4 & -2 & -25
\end{pmatrix}
\]
with $\det \Watson(A)=-2^3$.

The direct calculation of the automorphism group of $A$ takes more than $1$ hour while using the Watson process gives the result in about $14$ minutes, of which the orbit-stabilizer calculation took next to no time. For $A$, the algorithm finds $19$ inequivalent $D$-perfect points, for $\Watson(A)$ there are only $3$. This and the reduction of the determinant by a factor $2^2$ explains this difference.
\end{example}
After these rather random examples we also give some results for simpler Gram matrices. 
\begin{example}
The standard hyperbolic form
\[H^{(1)}_n:=diag(-1,1,\dots ,1)\in\Zhyp\]
can be handled by our algorithm in the current implementation for $n\leq 8$. For $n\leq 4$, there is exactly one $D$-perfect point 
\[x_1=(1,0,\dots ,0)\]
with $2^{n-1}$ directions for $n\geq 3$ and no non-blind direction for $n=2$. 

For $n\geq 5$, we have the same point with the same number of directions and one additional point
\[x_2\in\{(3,-1,1,-1,1),(3,-1,1,1,-1,-1),(3,1,1,-1,-1,1,1),(3,-1,1,1,1,1,-1,1) \}\]
with $5,32,99, and 632$ directions respectively. 

Computing this takes much less than one second for $n\leq 5$ and for $n=6$ ($n=7,n=8$) it takes about $2$ seconds ($1$ minute, $3$ minutes). 

For $n\geq 9$ we eventually run out of memory.
\end{example}
We now consider hyperbolic lattices arising from graphs. The entries of the matrix corresponding to a graph  are defined as $2$ on the diagonal and $-1$ on position $(i,j)$ if vertices $i$ and $j$ are connected by an edge.
\begin{example}
\begin{enumerate}[(i)]
\item The matrices of complete graphs (in the above sense) define hyperbolic matrices if the graph has at least $4$ vertices (cf. \Cref{figComplGraphs}). 
\begin{figure}[h!]
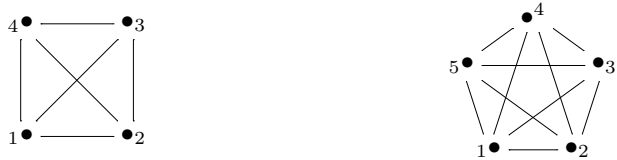

\begin{minipage}[c]{0.5\textwidth}
\[\xygraph{
                         !{<0cm,0cm>;<1.5cm,0cm>:<0cm,1.5cm>::}
                         !{(0,0)}*+{{}_{1}\bullet}="1"
                         !{(1,0)}*+{\bullet_{2}}="2"
                         !{(1,1)}*+{\bullet_{3}}="3"
                         !{(0,1)}*+{{}_{4}\bullet}="4"
                         "1"-"2"
                         "1"-"3"
                         "1"-"4"
                         "2"-"3"
                         "2"-"4"
                         "3"-"4"
                         }\]
                         
                         \end{minipage}
                         \begin{minipage}[c]{0.5\textwidth}

\[\xygraph{
                         !{<0cm,0cm>;<1cm,0cm>:<0cm,1cm>::}
                         !{(-0.59,-0.81)}*+{{}_{1}\bullet}="1"
                         !{(0.59,-0.81)}*+{\bullet_{2}}="2"
                         !{(0.95,0.31)}*+{\bullet_{3}}="3"
                         !{(0,1)}*+{\bullet^{4}}="4"
                         !{(-0.95,0.31)}*+{{}_{5}\bullet}="5"
                         "1"-"2"
                         "1"-"3"
                         "1"-"4"
                         "1"-"5"
                         "2"-"3"
                         "2"-"4"
                         "2"-"5"
                         "3"-"4"
                         "3"-"5"
                         "4"-"5"
                         }\]
                         
                         \end{minipage}
                         \label{figComplGraphs}
                         \caption{Complete graphs with $4$ and $5$ vertices}
\end{figure}
For the corresponding hyperbolic lattice our algorithm yields in dimension $4$ ($5$) $1$ $D$-perfect point $x_1=(1,\dots ,1)$ within much less than $1$ (about $17$) seconds. For the complete graph of order $6$ the algorithm needs too much memory.
\item Consider the hyperbolic lattice to the graph
\[\xygraph{
                         !{<0cm,0cm>;<1cm,0cm>:<0cm,1cm>::}
                         !{(0,-1)}*+{\bullet_1}="1"
                         !{(0.86,-0.5)}*+{\bullet_2}="2"
                         !{(0.86,0.5)}*+{\bullet_3}="3"
                         !{(0,1)}*+{\bullet_4}="4"
                         !{(-0.86,0.5)}*+{\bullet_5}="5"
                         !{(-0.86,-0.5)}*+{\bullet_6}="6"
                         "1"-"2"
                         "1"-"3"
                         "1"-"6"
                         "1"-"5"
                         "2"-"3"
                         "2"-"4"
                         "2"-"6"
                         "3"-"4"
                         "3"-"5"
                         "4"-"5"
                         "4"-"6"
                         "5"-"6"
                         }\]
For this one we obtain $2$ $D$-perfect points
\[x_1=(0,0,1,1,1,1),\quad x_2=(-1,1,4,3,3,2)\]
within less than $5$ minutes.

The residue class graph looks like this:
\[\xygraph{
                         !{<0cm,0cm>;<2cm,0cm>:<0cm,2cm>::}
                         !{(0,0)}*+{\bullet_{x_1}}="x1"
                         !{(2,0)}*+{\bullet_{x_2}}="x2"
                         "x1"-"x2"
                         "x1"-@(ul,dl) "x1"_{c_{1,1}}
                         "x1"-@/^0.7cm/ "x2"^{c_{1,2}}
                         "x2"-@(ur,dr) "x2"^{c_{2,2},c_{2,2}'}
                         }.\]
\end{enumerate}
\end{example}
\subsection{Statistics}\label{subStat}
Since we cannot give a rigorous running time analysis of our algorithm, we give some statistics to illustrate the algorithm's performance. For that we randomly chose $1000$ hyperbolic $3\times 3$ matrices and measured the time needed to perform our algorithm completely (see \Cref{subWatson}).
\begin{figure}[h!]
\begin{minipage}[t]{0.5\textwidth}
\includegraphics[scale=0.5]{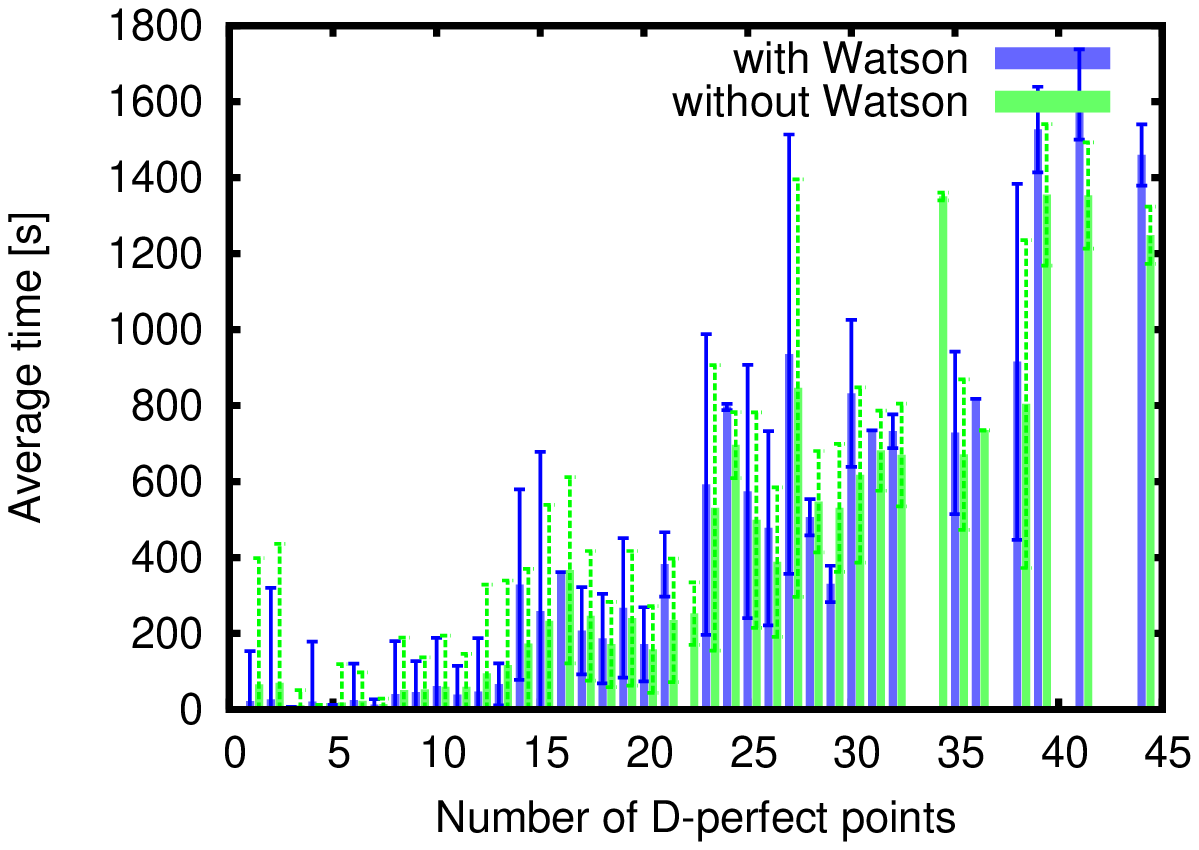}
\end{minipage}
\begin{minipage}[t]{0.5\textwidth}
\includegraphics[scale=0.5]{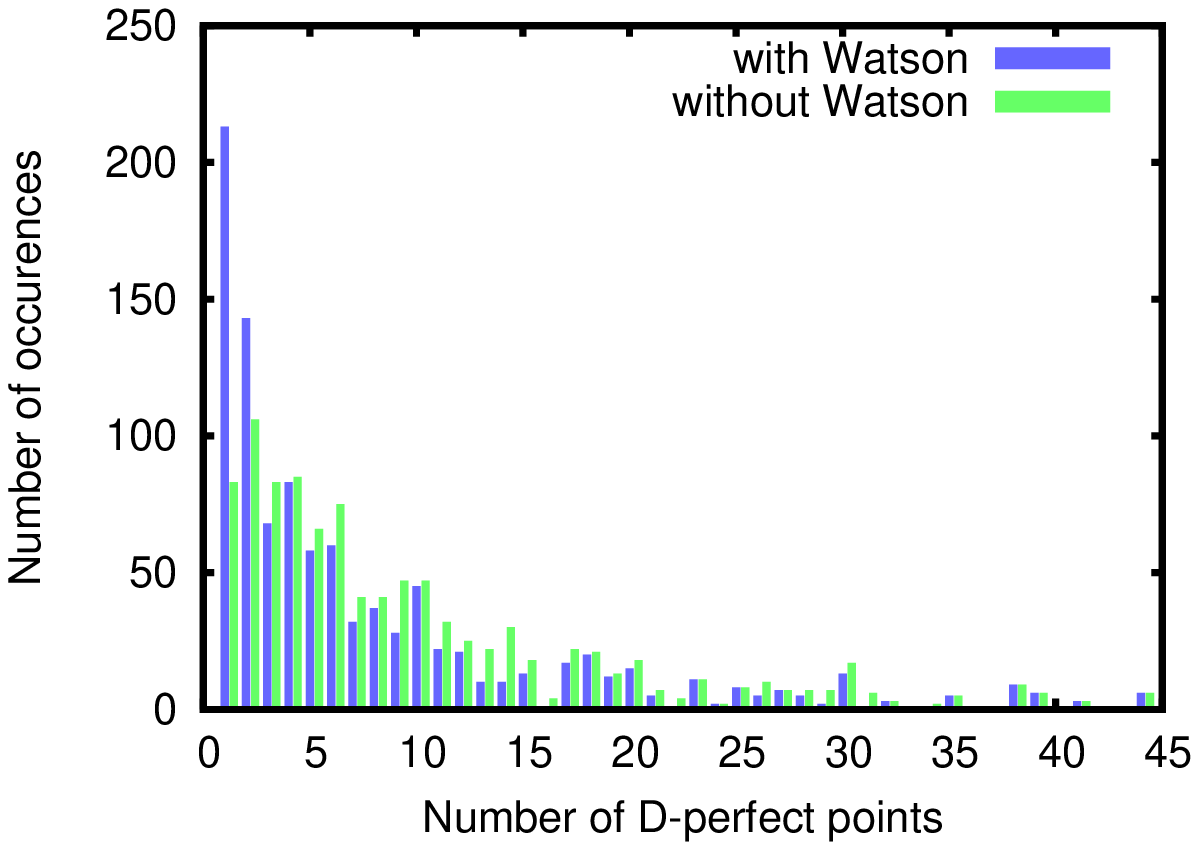}
\end{minipage}
\caption{Time and point number distribution with and without the Watson process}
\label{figWatson}
\end{figure} 
As we can see, the average time in dependance of the number of $D$-perfect points is comparably increasing in both cases, while in case of the usage of the Watson process we get many more examples with very few perfect points as predicted in \Cref{subWaston}.

We also did this with the additional condition that the quadratic form induced by $A$ should be isotropic resp. anisotropic. \Cref{figIso,figAniso} below show the statistical results. 
\begin{figure}[h!]
\begin{minipage}[t]{0.5\textwidth}
\includegraphics[scale=0.5]{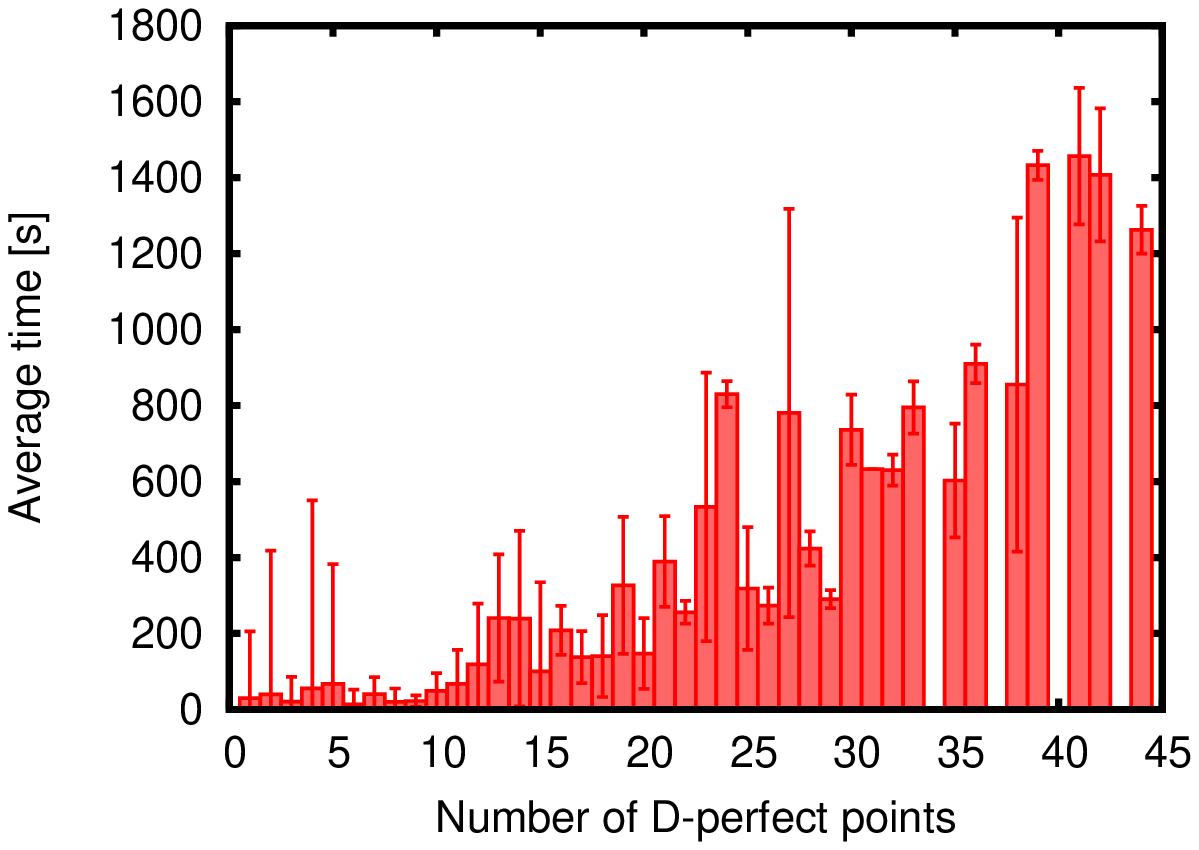}
\end{minipage}
\begin{minipage}[t]{0.5\textwidth}
\includegraphics[scale=0.5]{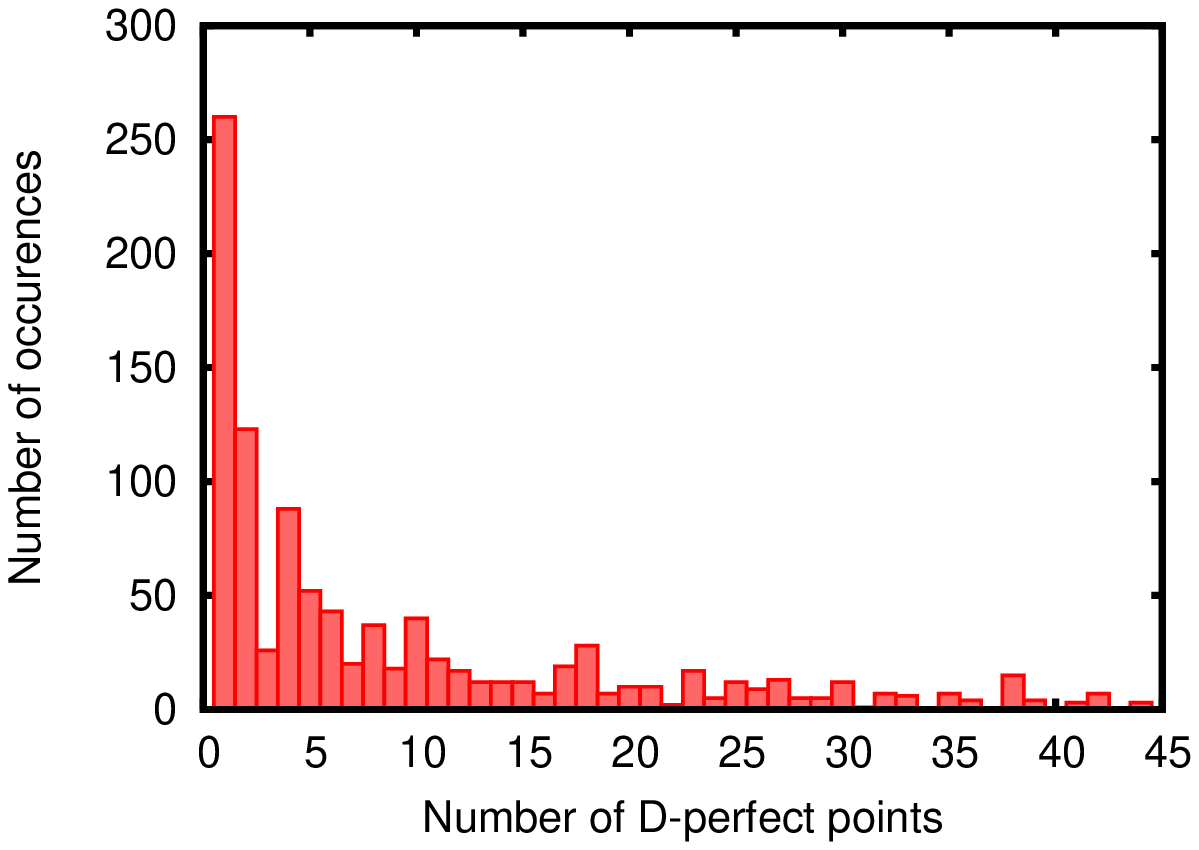}
\end{minipage}
\caption{Time and point number distribution for isotropic forms}
\label{figIso}
\end{figure} 
 
\begin{figure}[h!]
\begin{minipage}[t]{0.5\textwidth}
\includegraphics[scale=0.5]{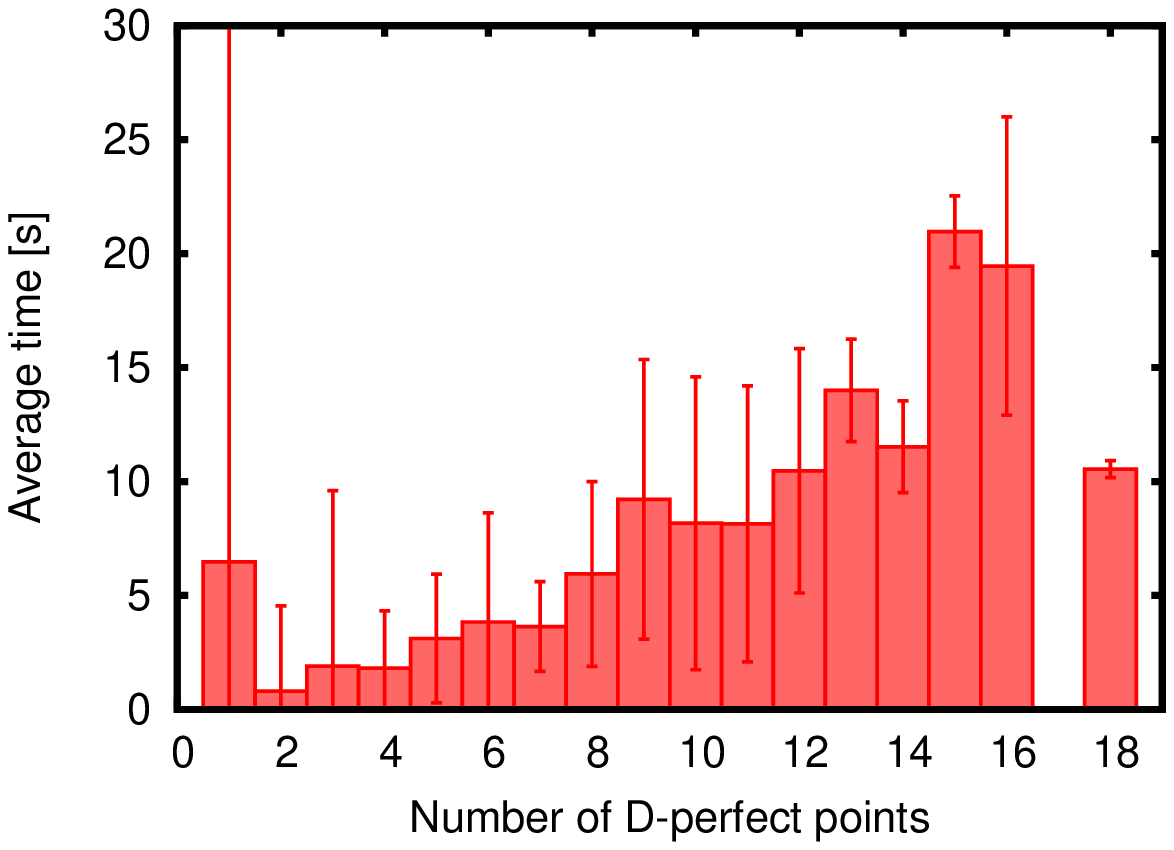}
\end{minipage}
\begin{minipage}[t]{0.5\textwidth}
\includegraphics[scale=0.5]{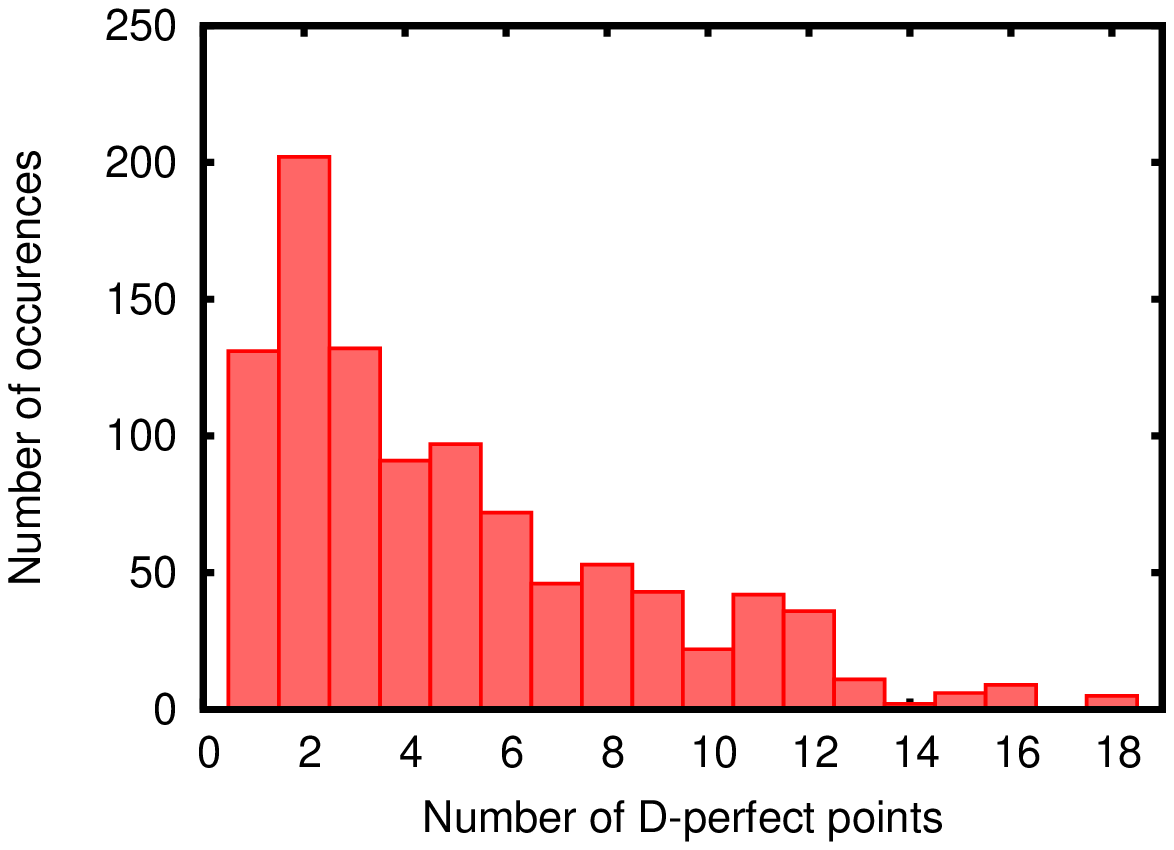}
\end{minipage}
\caption{Time and point number distribution for anisotropic forms}
\label{figAniso}
\end{figure} 
 
Apparently, automorphism groups of anisotropic lattices are - in a way - far easier to compute than the ones of isotropic lattices. This phenomenon occured  while testing the implementation of our algorithm, but we cannot really explain it. However, this also shows the significance of the usage of the Watson process, because, heuristically, the Watson process affects about $50\%$ of all isotropic forms of rank $3$, while it only affects about $30\%$ of all anisotropic forms and the computation for anisotropic forms is much faster anyway. On the other hand this also gives an indication towards why the average running time of our algorithm gets so much higher in dimensions $\geq 5$, since by Meyer's Theorem in these dimensions every hyperbolic form is isotropic.
\section*{Acknowledgements}
Some of the ideas presented in \Cref{secHyp} of this paper are already due to Opgenorth and are contained in preliminary and unpublished versions of \cite{Opg01} which the author was kindly allowed to use. 

He wants to thank Prof. Gabriele Nebe and Markus Kirschmer for reading a first draft of this paper and their helpful suggestions, David Lorch and again Markus Kirschmer for their help with the \textsc{Magma} implementation of the algorithm, and the anonymous referee whose comments helped to improve several parts of this paper.

\end{document}